\title{Nonlinear conjugate gradient method for vector optimization on Riemannian manifolds with retraction and vector transport}
\author{Kangming Chen$^\ast$ \and Ellen H. Fukuda$^\ast$ \and Hiroyuki Sato%
  \thanks{Graduate School of Informatics, Kyoto University, Kyoto \mbox{606--8501}, Japan
  (\texttt{kangming@amp.i.kyoto-u.ac.jp, ellen@i.kyoto-u.ac.jp, hsato@i.kyoto-u.ac.jp}).}}
\date{}
\newtheorem{assumption}{\rm\textbf{Assumption}}
\newcommand{\FR}{\mathrm{FR}}
\newcommand{\CD}{\mathrm{CD}}
\newcommand{\DY}{\mathrm{DY}}
\newcommand{\PRP}{\mathrm{PRP}}
\newcommand{\HS}{\mathrm{HS}}
\newcommand{\LS}{\mathrm{LS}}
\renewcommand{\Re}{\mathbb{R}} 
\newcommand{\norm}[1]{\left\|#1\right\|}
\DeclareMathOperator{\interior}{int}
\DeclareMathOperator{\conv}{conv}
\DeclareMathOperator{\cone}{cone}
\DeclareMathOperator{\Image}{Image}
\begin{document}

\maketitle

\begin{abstract}
In this paper, we propose nonlinear conjugate gradient methods for vector optimization on Riemannian manifolds. The concepts of Wolfe and Zoutendjik conditions are extended for Riemannian manifolds. Specifically, we establish the existence of intervals of step sizes that satisfy the Wolfe conditions. The convergence analysis covers the vector extensions of the Fletcher--Reeves, conjugate descent, and Dai--Yuan parameters. Under some assumptions, we prove that the sequence obtained by the algorithm can converge to a Pareto stationary point. Moreover, we also discuss several other choices of the parameter. Numerical experiments illustrating the practical behavior of the methods are presented.

\keywords{Conjugate gradient method, Vector optimization, Riemannian manifolds, Wolfe conditions, Line search algorithm}
\end{abstract}





\section{Introduction}

Manifold optimization refers to optimization problems where the search space is a manifold instead of Euclidean space. This approach has gained significant attention in recent years due to its several advantages over traditional optimization methods. 
In particular, a constrained Euclidean optimization problem can become an unconstrained one in manifold optimization if the set of constraints forms a Riemannian manifold.
There has been a growing body of research on manifold optimization. In particular, numerous algorithms and techniques have been developed, including gradient-based methods and trust-region methods, tailored specifically for optimization on manifolds~\cite{AbsMahSep2008, boumal2023intromanifolds, absil_trust-region_2007}. 

On the other hand, vector optimization refers to the minimization or maximization of vector-valued functions with respect to a cone, where the aim is to find a set of solutions that simultaneously optimize multiple objective functions. The well-known multiobjective optimization is a particular case of vector optimization, where the cone is the nonnegative orthant. Unlike scalar optimization, vector and multiobjective optimization consider the trade-offs and compromises among different objectives. It aims to identify efficient solutions that lie on the Pareto front, representing the best possible trade-off between conflicting objectives. Besides the traditional scalarization methods and metaheuristics, several descent methods have been proposed for vector optimization~\cite{fukuda_inexact_2013, grana_drummond_steepest_2005, tanabe_proximal_2019, chen_conditional_2023}.

In this paper, we deal with the combination of vector optimization and Riemannian manifold optimization. In the literature, we can observe that the multiobjective or vector steepest descent method \cite{bento_unconstrained_2012, bento_inexact_2013}, the subgradient method \cite{bento_subgradient_2013}, and the proximal point method \cite{bento_proximal_2018} have been extended to Riemannian manifolds. Now increasing attention has been given to this field, and here we study the so-called nonlinear conjugate gradient methods. We remark that the conjugate gradient method was originally introduced by Fletcher and Reeves in \cite{flecher1964function}, while the Riemannian conjugate gradient method has been discussed in several studies, including \cite{ sato_riemannian_2021_article, sakai2021sufficient, zhu2017riemannian}. On the other hand, the nonlinear conjugate gradient method for vector optimization was first proposed in~\cite{lucambio_perez_nonlinear_2018}, and some other choices of parameters were considered later in~\cite{goncalves_2020,goncalves_study_nodate}. 

Here, we propose a nonlinear conjugate gradient method for vector optimization on Riemannian manifolds. We note, however, that an unpublished work by Ferreira, Lucambio-P\'erez, and Prudente~\cite{ferreira_unpublished} also considered those methods, with a specific choice of retractions and vector transports. In particular, they used the exponential map and the parallel translations, which may not be easy to compute numerically. During the writing of our paper,
a similar work, that uses general retractions and vector transports but for multiobjective problems, was also done in \cite{najafi_multiobjective_2023}. They establish the convergence of the algorithm for the Fletcher--Reeves (FR) and Dai--Yuan (DY) parameters with the corresponding sufficient descent condition and a desired inequality. In this paper, we do not rely on satisfying these conditions. Instead, we demonstrate that our algorithm can obtain descent directions and establish its convergence not only under FR and DY but also conjugate descent (CD) parameters. Moreover, we also discuss several other choices of the parameter.

This paper is organized as follows. In Section~\ref{sec:preliminaries}, we review some basic knowledge and results used in vector optimization and Riemannian optimization. In Section~\ref{sec:NVRCG}, we propose an algorithm of nonlinear conjugate gradient method for vector optimization on Riemannian manifolds (NVRCG), using retractions and vector transports. In Section~\ref{sec:convergence}, we extend Zoutendjik’s type condition to Riemannian manifolds and analyze the convergence of the NVRCG method with the Riemannian extensions of FR, CD, and DY parameters. We also give simple results for Polak--Ribière--Polyak (PRP), Hestenes--Stiefel (HS), and Liu--Storey (LS) parameters. Some simple numerical experiments are presented in Section~\ref{sec:experiments}. Finally, in Section~\ref{sec:conclusion} we conclude this paper and give potential future directions.

\section{Preliminaries}
\label{sec:preliminaries}

In this section, we summarize some definitions, results, and important notions used in vector optimization and Riemannian optimization. We refer to \cite{grana_drummond_steepest_2005, drummond_projected_2004, VectorOptimization} for details related to vector optimization. 
Here, let $K\subset \mathbb{R}^m$ be a closed, convex, and pointed (i.e., $K \cap (-K) = \{ 0 \}$) cone with a nonempty interior.
The partial order in $\mathbb{R}^m$ induced by $K$ is denoted by $\preceq_K$, and defined by
$$
u \preceq_K v \Leftrightarrow v-u \in K,
$$
and the relation $\prec_K$ induced by $\interior(K)$, the interior of $K$, is defined by
$$
u \prec_K v \Leftrightarrow v-u \in \operatorname{int}(K) .
$$ 
Let $C \subset K^* \setminus \{0\}$ be compact and such that 
$$
K^* = \cone(\conv(C)),
$$
that is, the cone generated by the convex hull of $C$ is equal to $K^*$, where $K^* :=\{ \omega \in\mathbb{R}^m\mid \omega^T y \geq 0 \mbox{ for all } y \in K \}$ is the positive polar (or dual) cone of $K$. 
 In general, we note that a smaller set $C$
can be chosen. For instance, in classical optimization, we have $K =\Re_+ $, then $ K^* = \Re_+$ and $C = \{1\}$. In multiobjective optimization, $K = K^* = \Re^m_+$, and thus $C$ can be taken as the canonical basis.
In the general case, we can specifically take 
$$ 
C= \{ \omega \in K^* \mid \norm{\omega} =1\},
$$
where $\norm{\cdot}$ denotes the Euclidean norm.

Now, consider the following vector optimization problem:
$$
\underset{K}{\operatorname{min}} \, F(x) \quad \mbox{s.t. } x \in \mathbb{R}^n,
$$
where $F \colon \mathbb{R}^n \rightarrow \mathbb{R}^m$ is a continuously differentiable function, and the minimization is with respect to the cone $K$. As in multiobjective optimization, there is not always a point that minimizes all the objectives at once, so we use the concept of Pareto optimality. A point $x^* \in \mathbb{R}^n$  is called $K$-Pareto optimal  if there exists no other $x \in \mathbb{R}^n$ with $F(x) \preceq_K F\left(x^*\right)$ and $F(x) \neq F\left(x^*\right)$, or weakly $K$-Pareto  optimal  if there exists no other $x \in \mathbb{R}^n$ with $F(x) \prec_K F\left(x^*\right)$.  It can be readily observed that $K$-Pareto optimal points are always weakly $K$-Pareto optimal. However, it should be noted that the converse does not hold universally.
Moreover, a necessary condition for the $K$-optimality of $x^*$ is
$$
-\operatorname{int}(K) \cap \operatorname{Image}\left(J F\left(x^*\right)\right)=\emptyset,
$$
where $J F(x)$ denotes the Jacobian of $F$ at $x$, and $\Image$ means the image on $\mathbb{R}^m$. A point $x^* \in \mathbb{R}^n$ that satisfies the above condition is called $K$-critical (or $K$-stationary) for $F$. Therefore, if $x$ is not $K$-critical, there exists $d \in \mathbb{R}^n$ such that $J F(x) d \in-\operatorname{int}(K)$. Every such vector $d$ is a $K$-descent direction for $F$ at $x$, i.e., there exists $T>0$ such that $0<t<T$ implies that $F(x+t d) \prec_K F(x)$. 
Here, the issue of finding optimal points (weakly or otherwise) for the problem induced by $K = \Re^n$ is known as a multiobjective optimization one. In this case,  $x \in \mathbb{R}^n$ is a critical point if and only if $J F(x) d \nprec 0$ for all $d \in \mathbb{R}^n$, namely,
$$
\max _{i \in \{1, \ldots, m\} } \nabla F_i(x)^T d \geq 0, \quad \mbox{ for all } d \in \mathbb{R}^n.
$$

Now we give some important knowledge related to Riemannian manifolds \cite{AbsMahSep2008, boumal2023intromanifolds, sato_riemannian_2021}.
A Riemannian manifold $\mathcal{M}$ is a manifold endowed with a Riemannian metric 
$(\eta_x, \sigma_x) \mapsto \langle\eta_x, \sigma_x\rangle_x \in \mathbb{R} $, where $\eta_x$ and $\sigma_x$ are tangent vectors in the tangent space of $\mathcal{M}$  at $x$.
The tangent space of a manifold $\mathcal{M}$ at $x \in \mathcal{M}$ is denoted as $T_x \mathcal{M}$, 
and the tangent bundle of $\mathcal{M}$ is denoted as $T \mathcal{M}:=\left\{(x, d) \mid d \in T_x \mathcal{M}, x \in \mathcal{M}\right\}$. The norm of $\eta \in T_x \mathcal{M}$ is defined as $\|\eta\|_x:=\sqrt{\langle\eta, \eta\rangle_x}$. 
For a map $F: \mathcal{M} \rightarrow \mathcal{N}$ between two manifolds $\mathcal{M}$ and $\mathcal{N}, \mathrm{D} F(x): T_x \mathcal{M} \rightarrow T_{F(x)} \mathcal{N}$ denotes the derivative of $F$ at $x \in \mathcal{M}$. 
The Riemannian gradient grad $f(x)$ of a function $f: \mathcal{M} \rightarrow \mathbb{R}$ at $x \in \mathcal{M}$ is defined as a unique tangent vector at $x$ satisfying $\langle\operatorname{grad} f(x), \eta\rangle_x=\mathrm{D} f(x)[\eta]$ for any $\eta \in T_x \mathcal{M}$.

In Riemannian optimization, we use a retraction to project points from the tangent space of the manifold onto the manifold itself. 

\begin{definition}\cite{sato_riemannian_2021}
    A smooth map  $R \colon T \mathcal{M} \rightarrow \mathcal{M}$ is called a retraction on a smooth manifold $\mathcal{M}$ if  the restriction of $R$  to the tangent space $T_x \mathcal{M}$ at any point $x \in \mathcal{M}$, denoted by $R_x$, satisfies  the following conditions:
    \begin{enumerate}
    \item $R_x\left(0_x\right)=x$,
    \item $\mathrm{D} R_x\left(0_x\right)=\operatorname{id}_{T_x \mathcal{M}}$ for all $x \in \mathcal{M}$, 
    \end{enumerate}
    where $0_x$ and $\operatorname{id}_{T_x \mathcal{M}}$ are the zero vector of $T_x \mathcal{M}$ and identity map in $T_x \mathcal{M}$, respectively.
\end{definition}
The concept of retraction plays a fundamental role in optimization algorithms by providing a way to move from the tangent space back to the manifold, ensuring that iterates remain feasible solutions. Different types of retractions can be used depending on the problem and the specific properties of the manifold. Many Riemannian optimization algorithms use a retraction that is a generalization of the exponential map 
on $\mathcal{M}$.
For instance, assume that we have an iterative method generating iterates $\{x^k\}$. In Euclidean spaces, the update takes the form $x_{k+1}=x_k + t_k d _k$, while in the Riemannian case it is generalized as
$$x_{k+1}=R_{x_k}(t_k d _k),\quad \text{for }  k = 0,1,2, \ldots,$$
where $d_k$ is a descent direction and $t_k$ is a step size. 

Another important concept is the vector transport, which plays a crucial role in Riemannian optimization by ensuring that computations are performed in a manner that preserves the manifold's intrinsic geometry. By leveraging transport, optimization algorithms can effectively navigate the manifold's curved spaces, compute relevant geometric quantities, and converge toward optimal solutions.
\begin{definition}\cite{sato_riemannian_2021}
A map $\mathcal{T} \colon T \mathcal{M} \oplus T \mathcal{M} \rightarrow T \mathcal{M}$ is called a vector transport on $\mathcal{M}$ if it satisfies the following conditions, where $T \mathcal{M} \oplus T \mathcal{M}:=\left\{(\xi, d) \mid \xi, d \in T_x \mathcal{M}, x \in \mathcal{M}\right\}$ is the Whitney sum:
\begin{enumerate}
\item There exists a retraction $R$ on $\mathcal{M}$ such that $\mathcal{T}_d(\xi) \in T_{R_x(d)} \mathcal{M}$ for all $x \in \mathcal{M}$ and $\xi, d \in T_x \mathcal{M}$.
\item For any $x \in \mathcal{M}$ and $\xi \in T_x \mathcal{M}, \mathcal{T}_{0_x}(\xi)=\xi$ holds, where $0_x$ is the zero vector in $T_x \mathcal{M}$, i.e., $\mathcal{T}_{0_x}$ is the identity map.
\item For any $a, b \in \mathbb{R}, x \in \mathcal{M}$, and $\xi, d, \zeta \in T_x \mathcal{M}, \mathcal{T}_d(a \xi+b \zeta)=a \mathcal{T}_d(\xi)+b \mathcal{T}_d(\zeta)$ holds, i.e., $\mathcal{T}_d$ is a linear map from $T_x \mathcal{M}$ to $T_{R_x(d)} \mathcal{M}$.
\end{enumerate}
\end{definition}

Note that a map $\mathcal{T}$ defined by $\mathcal{T}_d(\xi):=\mathrm{P}_{\gamma_{x, d}}^{1 \leftarrow 0}(\xi)$ is also a vector transport, 
where $\mathrm{P}_{\gamma_{x, d}}^{1 \leftarrow 0}$ is the parallel translation along the geodesic $\gamma_{x, d}(t):=\operatorname{Exp}_x(t d)$ connecting $\gamma_{x, d}(0)=x$ and $\gamma_{x, d}(1)=\operatorname{Exp}_x(d)$ with the exponential map Exp as a retraction.

\section{Nonlinear vector Riemannian conjugate gradient method}
\label{sec:NVRCG}

We consider the following unconstrained vector optimization problem:
\begin{equation}\label{prob:1}
    \begin{array}{ll}
    \underset{K}{\operatorname{min}} & F(x) \\
    \text { s.t. } & x \in \mathcal{M},
    \end{array}
\end{equation}
where $F:\mathcal{M} \to \Re^m$ is continuously differentiable and $\mathcal{M}$ is an  $n$-dimensional smooth manifold with a Riemannian metric $\left \langle \cdot, \cdot  \right \rangle$. 
Define the function $\varphi: \mathbb{R}^m \rightarrow \mathbb{R}$ by
$$
\varphi(y) :=\sup \{ y^Tw \mid w \in C\}.
$$
In view of the compactness of $C$, where $C$ is given in Section \ref{sec:preliminaries}, $ \varphi$ is well defined. It can easily be seen that when $ m= 1$, $ \varphi(y)= y$ holds.
Then, we have
$$
\varphi(\mathrm{D} F(x) d)=\sup \{[\mathrm{D} F(x) d]^T w \mid w \in C\}.
$$
Note that it can be viewed as a nonlinear programming problem when $K=\mathbb{R}_{+}^m$.
The function $\varphi$ has some useful properties, that we recall below.
\begin{lemma}\cite[Lemma 3.1]{grana_drummond_steepest_2005}\label{lemmaLip}
Let $y, y^{\prime} \in \mathbb{R}^m$. Then, the following statements hold:
\begin{itemize}
    \item[(1)] $\varphi\left(y+y^{\prime}\right) \leq \varphi(y)+\varphi\left(y^{\prime}\right)$ and $\varphi(y)-\varphi\left(y^{\prime}\right) \leq \varphi\left(y-y^{\prime}\right)$;

    \item[(2)] If $y \preceq_K y^{\prime}$, then $\varphi(y) \leq \varphi\left(y^{\prime}\right)$; if $y \prec_K y^{\prime}$, then $\varphi(y)<\varphi\left(y^{\prime}\right)$;

    \item[(3)] $\varphi$ is Lipschitz continuous with constant $1$ .
\end{itemize}
\end{lemma}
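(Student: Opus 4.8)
The plan is to argue everything directly from the definition $\varphi(y)=\sup\{y^{T}w \mid w\in C\}$, using only linearity of $w\mapsto y^{T}w$, compactness of $C$, the inclusion $C\subset K^{*}\setminus\{0\}$, and the normalization $\norm{w}=1$ for $w\in C$ (the canonical choice highlighted in Section~\ref{sec:preliminaries}). For part~(1), I would first note that for each fixed $w\in C$ we have $(y+y')^{T}w=y^{T}w+(y')^{T}w\le\varphi(y)+\varphi(y')$; taking the supremum over $w\in C$ gives the subadditivity $\varphi(y+y')\le\varphi(y)+\varphi(y')$. Applying this to the decomposition $y=(y-y')+y'$ yields $\varphi(y)\le\varphi(y-y')+\varphi(y')$, and rearranging gives the second inequality of~(1).

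For part~(2), assume first $y\preceq_{K}y'$, i.e.\ $y'-y\in K$. Since $C\subset K^{*}$, every $w\in C$ satisfies $w^{T}(y'-y)\ge 0$, hence $y^{T}w\le(y')^{T}w\le\varphi(y')$; taking the supremum over $w\in C$ gives $\varphi(y)\le\varphi(y')$. For the strict statement, assume $y'-y\in\interior(K)$. I would invoke the standard fact that $w^{T}(y'-y)>0$ for every $w\in K^{*}\setminus\{0\}$: if $w\in K^{*}$ and $w^{T}(y'-y)=0$, then since a whole ball around $y'-y$ lies in $K$ one gets $w^{T}v=0$ for every direction $v$, forcing $w=0$. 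Thus the continuous map $w\mapsto w^{T}(y'-y)$ is strictly positive on the compact set $C$, hence attains a minimum $\delta>0$, so $y^{T}w\le(y')^{T}w-\delta\le\varphi(y')-\delta$ for all $w\in C$, and therefore $\varphi(y)\le\varphi(y')-\delta<\varphi(y')$.

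For part~(3), part~(1) applied symmetrically gives $\varphi(y)-\varphi(y')\le\varphi(y-y')$ and $\varphi(y')-\varphi(y)\le\varphi(y'-y)$. For any $z\in\mathbb{R}^{m}$ and $w\in C$, Cauchy--Schwarz together with $\norm{w}=1$ give $z^{T}w\le\norm{z}$, hence $\varphi(z)\le\norm{z}$; applying this with $z=y-y'$ and $z=y'-y$ yields $|\varphi(y)-\varphi(y')|\le\norm{y-y'}$, so $\varphi$ is $1$-Lipschitz. The only step that needs genuine care is the strict inequality in~(2): compactness of $C$ together with $0\notin C$ is exactly what promotes the pointwise positivity $w^{T}(y'-y)>0$ to a uniform gap $\delta>0$, and without such a gap the supremum defining $\varphi(y)$ could in principle equal $\varphi(y')$. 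Everything else is routine manipulation of suprema.
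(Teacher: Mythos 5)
Your proof is correct and is the standard argument for this lemma, which the paper itself does not prove but simply imports from Gra\~na Drummond and Svaiter; each of your three parts (subadditivity from linearity of $w\mapsto y^Tw$, monotonicity from $C\subset K^*$ plus the uniform gap on the compact set $C$ for the strict case, and Cauchy--Schwarz for the Lipschitz bound) is sound. The only point worth flagging is that the Lipschitz constant $1$ in part (3) genuinely depends on the normalization $\|w\|\le 1$ for all $w\in C$ (which holds for the canonical basis in the multiobjective case and for $C=\{\omega\in K^*\mid\|\omega\|=1\}$, but not for an arbitrary compact generator of $K^*$, where the constant would be $\max_{w\in C}\|w\|$), so your explicit appeal to that normalization is exactly the right care to take.
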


\noindent From the definition of function  $\varphi$, we note that:
\begin{itemize}
        \item $d$ is a $K$-descent direction for $K$ at $x$ if and only if $\varphi(\mathrm{D} F(x) d)< 0$,
        \item $x$ is $K$-critical if and only if $\varphi(\mathrm{D} F(x) d) \geq 0$ for all $d$.
\end{itemize}

Now, consider the extension of the notion of the steepest descent direction to the vector-valued case. We denote it as $v \colon \mathbb{R}^n \rightarrow \mathbb{R}^n$ and define it as
\begin{equation}
    \label{eq:v(x)}
    v(x) := {\arg \min}_d \left\{\varphi(\mathrm{D} F(x) d)+\frac{\|d\|^2}{2} \, \middle| \, d \in \mathbb{R}^n \right\}.
\end{equation}
Also, let $\theta \colon \mathbb{R}^n \rightarrow \mathbb{R}$ be defined as $\theta(x) := \varphi(\mathrm{D}F(x)v(x))+\|v(x)\|^2 / 2$. We note that when $m = 1$ and $C = \{1\}$, we have $v(x)= -\nabla f(x)$, that is, we retrieve the classical steepest descent direction in nonlinear optimization.

\begin{remark}
For multiobjective optimization, where $K=\mathbb{R}_{+}^m$, with $C$ given by the canonical basis of $\mathbb{R}^m$,  the direction $v(x)$ can be computed by solving
$$
\begin{array}{ll}
\operatorname{minimize} & \alpha+\frac{1}{2}\|d\|^2 \\
\text { subject to } & {[J F(x) d]_i \leq \alpha, \quad i=1, \ldots, m,}
\end{array}
$$
which is a convex quadratic problem with linear inequality constraints.
\end{remark}

With the above definition, we can easily obtain the following lemma.
\begin{lemma}\cite[lemma 3.3]{grana_drummond_steepest_2005}
    \begin{itemize}
        \item[(1)] If $x$ is $K$-critical, then $v(x)=0$ and $\theta(x)=0$.
        \item[(2)] If $x$ is not $K$-critical, then $v(x) \neq 0$, $\theta(x)<0$, $\varphi(\mathrm{D} F(x) v(x))<-\frac{\|v(x)\|^2}{2}<0$, and $v(x)$ is a $K$-descent direction for $F$ at $x$.
        \item[(3)] The mappings $v$ and $\theta$ are continuous. 
    \end{itemize}
\end{lemma}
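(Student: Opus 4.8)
The plan is to study the parametrized function
\[
\psi_x(d) := \varphi\big(\mathrm{D}F(x)[d]\big) + \tfrac12\norm{d}_x^2, \qquad d \in T_x\mathcal{M},
\]
for which $v(x)$ is the minimizer and $\theta(x) = \psi_x(v(x))$ the minimal value. The first step is to check that $v(x)$ is well defined. Since $\varphi$ is a supremum of linear functionals it is convex, and $\mathrm{D}F(x)$ is linear, so $d \mapsto \varphi(\mathrm{D}F(x)[d])$ is convex; adding the strictly convex quadratic $\tfrac12\norm{d}_x^2$ makes $\psi_x$ strictly convex. Moreover, the $1$-Lipschitz property of $\varphi$ (Lemma~\ref{lemmaLip}(3)) gives $\varphi(y) \ge \varphi(0) - \norm{y} = -\norm{y}$, whence $\psi_x(d) \ge \tfrac12\norm{d}_x^2 - \norm{\mathrm{D}F(x)}_{\mathrm{op}}\norm{d}_x$, which is coercive. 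A continuous, strictly convex, coercive function on the finite-dimensional space $T_x\mathcal{M}$ has a unique minimizer, so $v$ and $\theta$ are well defined.

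Parts (1) and (2) then follow by elementary comparisons. If $x$ is $K$-critical, then $\varphi(\mathrm{D}F(x)[d]) \ge 0$ for all $d$ (the second bullet preceding the lemma), so $\psi_x(d) \ge \tfrac12\norm{d}_x^2 \ge 0 = \psi_x(0_x)$ with equality only at $d = 0_x$; hence $v(x) = 0_x$ and $\theta(x) = 0$. If $x$ is not $K$-critical, there is $\bar d \in T_x\mathcal{M}$ with $\varphi(\mathrm{D}F(x)[\bar d]) < 0$, and along the ray $\psi_x(t\bar d) = t\,\varphi(\mathrm{D}F(x)[\bar d]) + \tfrac{t^2}{2}\norm{\bar d}_x^2 < 0$ for small $t > 0$, so $\theta(x) < 0$; since $\psi_x(0_x) = 0$, this forces $v(x) \neq 0_x$, and consequently $\varphi(\mathrm{D}F(x)[v(x)]) = \theta(x) - \tfrac12\norm{v(x)}_x^2 < -\tfrac12\norm{v(x)}_x^2 < 0$. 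Finally, $\varphi(\mathrm{D}F(x)[v(x)]) < 0$ forces $\mathrm{D}F(x)[v(x)] \in -\interior(K)$ (because $K^* = \cone(\conv(C))$); a first-order expansion along the retraction curve, $F(R_x(t v(x))) - F(x) = t\big(\mathrm{D}F(x)[v(x)] + o(1)\big)$ using $\mathrm{D}R_x(0_x) = \operatorname{id}$, together with the openness of $-\interior(K)$, shows $F(R_x(t v(x))) \prec_K F(x)$ for all small $t > 0$, i.e., $v(x)$ is a $K$-descent direction.

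The main obstacle is part (3): because the domain $T_x\mathcal{M}$ of the optimization problem varies with $x$, ``continuity of $v$'' must be read as continuity of a section of the tangent bundle $T\mathcal{M}$. I would argue locally: fix $x_0 \in \mathcal{M}$ and a smooth local frame trivializing $T\mathcal{M}$ over a neighborhood $U$ of $x_0$ as $U \times \mathbb{R}^n$. In these coordinates $\psi_x(d)$ becomes $\varphi(A(x)d) + \tfrac12 d^T G(x) d$ with $A(\cdot)$, the coordinate matrix of $\mathrm{D}F$, and $G(\cdot)$, the Gram matrix of the metric, both continuous; on a compact neighborhood $\bar V \subseteq U$ of $x_0$ one has $G(x) \succeq \mu I$ and $\norm{A(x)} \le L$ for constants $\mu > 0$, $L$, and $(x,d) \mapsto \varphi(A(x)d) + \tfrac12 d^T G(x)d$ is jointly continuous (using the $1$-Lipschitz continuity of $\varphi$). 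Then a standard stability argument applies: for $x_k \to x_0$, the bound $\psi_{x_k}(v(x_k)) \le \psi_{x_k}(0) = 0$ with the uniform coercivity $\psi_{x_k}(d) \ge \tfrac{\mu}{2}\norm{d}^2 - L\norm{d}$ shows $\{v(x_k)\}$ is bounded; passing to the limit in $\psi_{x_k}(v(x_k)) \le \psi_{x_k}(d)$ along any convergent subsequence and using uniqueness of the minimizer of $\psi_{x_0}$ identifies every cluster point as $v(x_0)$, so $v(x_k) \to v(x_0)$; and $\theta(x_k) = \psi_{x_k}(v(x_k)) \to \psi_{x_0}(v(x_0)) = \theta(x_0)$ by joint continuity (equivalently, one may invoke Berge's maximum theorem). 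Since $x_0$ was arbitrary, $v$ and $\theta$ are continuous on $\mathcal{M}$.
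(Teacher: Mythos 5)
Your proof is correct. Note, however, that the paper offers no proof of its own here: the lemma is stated with a citation to Gra\~na Drummond and Svaiter \cite[Lemma 3.3]{grana_drummond_steepest_2005}, whose argument (in the Euclidean setting) is essentially the one you reconstruct --- strict convexity plus coercivity of $d \mapsto \varphi(\mathrm{D}F(x)d) + \tfrac12\|d\|^2$ for well-posedness, the sign comparison $\psi_x(0)=0$ for parts (1)--(2), and a stability-of-minimizers argument for part (3). What your write-up adds beyond the cited source is the adaptation to the manifold setting, which the paper leaves implicit: you correctly read the minimization in \eqref{eq:v(x)} as being over $T_x\mathcal{M}$ with the Riemannian norm (the paper's formula literally writes $d\in\mathbb{R}^n$, a leftover from the Euclidean case), you replace the straight-line descent argument by a first-order expansion along the retraction curve $t\mapsto R_x(tv(x))$ using $\mathrm{D}R_x(0_x)=\operatorname{id}$, and you make sense of continuity of $v$ as continuity of a section of $T\mathcal{M}$ via a local trivialization with uniform coercivity on a compact neighborhood. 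All of these steps are sound; the only ingredients you use tacitly are the positive homogeneity of $\varphi$ (immediate from its definition as a support function) and the duality fact that $\sup_{w\in C} w^\top y<0$ iff $y\in-\interior(K)$, both of which are standard for the cone setup fixed in Section~\ref{sec:preliminaries}.
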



In order to get a proper decrease, line search procedures are crucial. In particular, for nonlinear conjugate gradient methods, the so-called Wolfe conditions are widely used, and thus we extend them to vector Riemannian manifolds.
Let $e \in K$ be given such that 
$$
0 < \langle\omega, e \rangle\leq 1\quad  \text{for all} \quad \omega \in C.
$$
In particular, for vector optimization, where $ K = \mathbb{R}^m_+$ and $C$ is given by the canonical basis of $\mathbb{R}^m$, we can take $e= [1,\ldots , 1]^T\in \mathbb{R}^m$. Letting $0< c_1< c_2< 1 $, we propose the (weak) Wolfe conditions as follows:
\begin{align}
    & F\left(R_{x_k}\left(t_k d_k\right)\right) \preceq_K F\left(x_k\right)+c_1 t_k \varphi\left(\mathrm{D}F(x_k)[d_k]\right) e, \label{eq:armijo} \\
    & \varphi\left(\mathrm{D}F(R_{x_k}\left(t_kd_k\right))[\mathrm{D} R_{x_k}\left(t_k d_k\right)\left[d_k\right]]\right) \geq c_2 \varphi\left(\mathrm{D}F(x_k)[d_k]\right),
    \label{eq:wolfe}
\end{align} 
and the strong Wolfe conditions are given by
\begin{align}
    & F\left(R_{x_k}\left(t_k d_k\right)\right) \preceq_K F\left(x_k\right)+c_1 t_k \varphi\left(\mathrm{D}F(x_k)[d_k]\right) e, \nonumber \\
    & | \varphi\left(\mathrm{D}F(R_{x_k}\left(t_kd_k\right))[\mathrm{D} R_{x_k}\left(t_k d_k\right)\left[d_k\right]]\right) | \leq c_2 |\varphi\left(\mathrm{D}F(x_k)[d_k]\right)|.
    \label{eq:strong_wolfe}
\end{align}
Note that the first condition is the same for both (weak) Wolfe and strong Wolfe conditions, and it is usually called Armijo condition.


In the Euclidean case, the search direction of the nonlinear vector conjugate gradient method is given by 
$$
d_{k+1}=v\left(x_{k+1}\right)+\beta_{k+1}d_{k},
$$
for $k \geq 0$, where $\beta_{k+1} \in \Re$ is a scalar algorithmic parameter.
To extend it to the Riemannian case, in this paper, we use a vector transport called the differentiated retraction \cite{sato_riemannian_2021}.
Using $\mathcal{T}^k: T_{x_k} \mathcal{M} \rightarrow T_{x_{k+1}} \mathcal{M}$ with $\mathcal{T}^k(d_{k}):= \mathcal{T}_{t_kd_k}^R(d_k)= \mathrm{D} R_{x_k}\left(t_k d_k\right)\left[d_k\right]$, we have
$$
d_{k+1}=v\left(x_{k+1}\right)+\beta_{k+1} \mathcal{T}^k(d_{k}).
$$
For simplicity, we also define 
$$
\psi_{x,d}(td):= \varphi(\mathrm{D} F(R_x(td))[\mathrm{D}R_x(td)[d]]),
$$ 
and then $\psi_{x_k,d_k}(0) = \varphi(\mathrm{D}F(x_k)[d_k]).$

In the case of single-objective optimization, serval types of $\beta$ are proposed \cite{flecher1964function, fletcher1980unconstrained, dai2000convergence, polak1969note, stiefel1952methods, liu1991efficient}. Here, we extend them to vector optimization on Riemannian manifolds as follows:
$$ \beta_k^{\FR} = \frac{\psi_{x_k,v_k}(0)}{\psi_{x_{k-1},v_{k-1}}(0)} ,$$
$$ \beta_k^{\CD} = \frac{\psi_{x_k,v_k}(0)}{\psi_{x_{k-1},d_{k-1}}(0)} ,$$
$$ \beta^{\DY}_k=\frac{-\psi_{x_k,v_k}(0)}{\psi_{x_{k-1},d_{k-1}}(t_{k-1}d_{k-1})-\psi_{x_{k-1},d_{k-1}}(0)},$$
$$
\beta^{\PRP}_k = \frac{-\psi_{x_k,v_k}(0)+\psi_{x_{k},\mathcal{T}^{k-1}{(v_k)}}(0)}{-\psi_{x_{k-1},v_{k-1}}(0)},
$$
$$ \beta_k^{\LS} =\frac{-\psi_{x_k,v_k}(0)+\psi_{x_{k-1},\mathcal{T}^{k-1}{(v_k)}}(0)}{-\psi_{x_{k-1},d_{k-1}}(0)} ,$$
$$
\beta^{\HS}_k = \frac{-\psi_{x_k,v_k}(0)+\psi_{x_{k-1},\mathcal{T}^{k-1}{(v_k)}}(0)}{\psi_{x_{k},\mathcal{T}^{k-1}{(d_{k-1})}}(0)-\psi_{x_{k-1},d_{k-1}}(0)}.
$$


Now we state our nonlinear conjugate gradient method for vector optimization problem \eqref{prob:1} on Riemannian manifolds in Algorithm~\ref{alg:nvrcg}. In each iteration, the algorithm first computes the steepest descent direction by solving the subproblem \eqref{eq:v(x)} and then calculates the actual descent direction according to the updated formula, which also depends on the choice of $\beta$. Steps 3 and 4 give the proper step size and update the iteration. We will give a convergence analysis of the algorithm in the next section.

\begin{algorithm}[h]
  \caption{Nonlinear vector Riemannian conjugate gradient method (NVRCG)}
  \label{alg:nvrcg}
  \LinesNotNumbered
    \noindent Step 0. Let $x_0 \in \mathcal{M}$ and initialize $k \leftarrow 0$.\\
    Step 1. Compute $v\left(x_{k}\right)$ as in~\eqref{eq:v(x)}. If $v\left(x_k\right)=0$, then stop.\\
    Step 2. Compute
    $$
    d_k= \begin{cases}v\left(x_k\right), & \text { if } k=0, \\ v\left(x_k\right)+\beta_k \mathcal{T}^{k-1}(d_{k-1}), & \text { if } k \geq 1,\end{cases}
    $$
    where $\beta_k$ is an algorithmic parameter.\\
    Step 3. Compute a step size $t_k>0$ by a line search procedure and set $x_{k+1}=R_{x_k}(t_kd_k)$. \\
    Step 4. Set $k \leftarrow k+1$, and go to Step 1.
\end{algorithm}

\section{Global convergence analysis}
\label{sec:convergence}

In this section, we prove the convergence of the algorithm with each type of $\beta$.
First, we propose some assumptions for convergence analysis.
\begin{assumption}
\label{assm1}
There exist constant $L>0$, for all $x \in \mathcal{M}, d \in T_x \mathcal{M}$ with $\|d\|=1, t \geq 0$, 
such that
$$\left\|\mathrm{D}\left(F \circ R_x\right)(td)[d]-\mathrm{D}\left(F \circ R_x\right)(0)[d]\right\| \leq L t. $$
\end{assumption}
Remark: When $F=[f_1,\ldots, f_m]^T$, we can simply give such assumption for each $f_i$, and take $L = \max \{L_i, i =1, \ldots, m \}$.

\begin{assumption}
\label{assum2}
    Set $\mathcal{L} = \{ x\mid F(x)\preceq_K F(x_0)\}$.
    All monotonically nonincreasing sequences in $F(\mathcal{L})$ are bounded from below, i.e., 
    if $\left\{G_k\right\}_{k \in \mathbb{N}} \subset F(\mathcal{L})$ and $G_{k+1} \preceq_K G_k$ for all $k$, then there exists $\mathcal{G} \in \mathbb{R}^m$ such that $\mathcal{G} \preceq_K G_k$ for all $k$.
\end{assumption}

With Assumptions \ref{assm1} and \ref{assum2}, we can prove that the method we propose satisfies Zoutendjik’s type condition when $d_k$ is a $K$-descent direction. We will prove $d_k$ is a $K$ descent direction in each specific type of $\beta$ later.
\begin{proposition}\label{Zoutendijk}
Suppose that Assumptions \ref{assm1} and \ref{assum2} hold. Consider the iteration, where $d_k$ is a $K$-descent direction for $F$ at $x_k$, and $t_k$ satisfies the Wolfe conditions. Then, we have
$$
\sum_{k \geq 0} \frac{\varphi^2\left(\mathrm{D}F(x_k)[d_k]\right)}{\left\|d_k\right\|^2}<\infty.
$$
\end{proposition}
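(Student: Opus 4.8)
\medskip
\noindent\textbf{Proof sketch.}
The plan is to mimic the classical Zoutendjik-type argument, letting the scalar merit function $x\mapsto\varphi(F(x))$ play the role of the objective from the Euclidean scalar case. Two ingredients are needed: a lower bound on the step size $t_k$, obtained from the curvature condition~\eqref{eq:wolfe} together with Assumption~\ref{assm1}; and a quantitative decrease of the merit function along the iterates, obtained from the Armijo condition~\eqref{eq:armijo}. Combining the two estimates and summing over $k$ yields the claimed inequality.

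For the step-size bound, I would rewrite~\eqref{eq:wolfe} as $\psi_{x_k,d_k}(t_kd_k)\ge c_2\,\psi_{x_k,d_k}(0)$. Since $d_k$ is a $K$-descent direction, $\psi_{x_k,d_k}(0)=\varphi(\mathrm{D}F(x_k)[d_k])<0$, so subtracting $\psi_{x_k,d_k}(0)$ gives $\psi_{x_k,d_k}(t_kd_k)-\psi_{x_k,d_k}(0)\ge(1-c_2)\,|\varphi(\mathrm{D}F(x_k)[d_k])|>0$. For an upper bound on the same quantity I would use the chain-rule identity $\psi_{x_k,d_k}(sd_k)=\varphi(\mathrm{D}(F\circ R_{x_k})(sd_k)[d_k])$, the $1$-Lipschitz property of $\varphi$ (Lemma~\ref{lemmaLip}(3)), and Assumption~\ref{assm1}; because the latter is stated for unit tangent vectors, one first writes $d_k=\norm{d_k}\hat d_k$ with $\norm{\hat d_k}=1$ and invokes linearity of the differential in its direction argument, which leads to $\psi_{x_k,d_k}(t_kd_k)-\psi_{x_k,d_k}(0)\le L\,t_k\,\norm{d_k}^2$. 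Comparing the two bounds yields $t_k\ge(1-c_2)\,|\varphi(\mathrm{D}F(x_k)[d_k])|\,/\,(L\,\norm{d_k}^2)$.

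For the merit-function decrease, set $\underline{c}:=\min_{\omega\in C}\langle\omega,e\rangle$, which is strictly positive since $C$ is compact and $\langle\omega,e\rangle>0$ on $C$. Taking the inner product of both sides of~\eqref{eq:armijo} with an arbitrary $\omega\in C$ (legitimate because $\omega\in K^*$), using $\varphi(\mathrm{D}F(x_k)[d_k])<0$ and $\langle\omega,e\rangle\ge\underline{c}$, and then taking the supremum over $\omega\in C$, one obtains $\varphi(F(x_{k+1}))\le\varphi(F(x_k))-c_1\,\underline{c}\,t_k\,|\varphi(\mathrm{D}F(x_k)[d_k])|$. Since~\eqref{eq:armijo} also implies $F(x_{k+1})\preceq_K F(x_k)$, the sequence $\{F(x_k)\}$ lies in $F(\mathcal L)$ and is monotonically nonincreasing, so Assumption~\ref{assum2} together with Lemma~\ref{lemmaLip}(2) shows that $\{\varphi(F(x_k))\}$ is nonincreasing and bounded from below, hence convergent; therefore $\sum_{k\ge0}\bigl(\varphi(F(x_k))-\varphi(F(x_{k+1}))\bigr)<\infty$.

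Substituting the step-size bound into the decrease inequality gives
$$
\varphi(F(x_k))-\varphi(F(x_{k+1}))\ \ge\ \frac{c_1\,\underline{c}\,(1-c_2)}{L}\cdot\frac{\varphi^2(\mathrm{D}F(x_k)[d_k])}{\norm{d_k}^2},
$$
and summing over $k$, together with finiteness of the telescoping sum, proves the statement. I expect the step-size lower bound to be the main obstacle: it requires carefully combining the chain rule with the $1$-Lipschitz property of $\varphi$ and, above all, rescaling $d_k$ to unit norm so that Assumption~\ref{assm1} becomes applicable; the remainder of the argument is routine once $\underline{c}>0$ is observed.
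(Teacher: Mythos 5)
Your proposal is correct and follows essentially the same route as the paper: the curvature condition plus the $1$-Lipschitz property of $\varphi$ and Assumption~\ref{assm1} (after rescaling $d_k$ to unit norm) give the step-size lower bound $t_k\ge(1-c_2)\lvert\varphi(\mathrm{D}F(x_k)[d_k])\rvert/(L\norm{d_k}^2)$, and the Armijo condition together with Assumption~\ref{assum2} gives summability of $t_k\lvert\varphi(\mathrm{D}F(x_k)[d_k])\rvert$. The only cosmetic difference is that you scalarize first via the merit function $\varphi(F(\cdot))$ and telescope, whereas the paper telescopes the vector inequality and pairs with $w\in C$ at the end; both yield the same estimate.
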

\begin{proof}
Under Assumption \ref{assm1}, from the Wolfe conditions and Lemma \ref{lemmaLip}, we have
\begin{equation}\label{extentLip}
    \begin{aligned}
    &\left(c_2-1\right)\varphi\left(\mathrm{D} F\left(x_k\right) [d_k]\right)\\
    &\leq \varphi\left(\mathrm{D} F\left(R_{x_k}\left(t_k d_k\right) [\mathrm{D} R_{x_k}\left(t_k d_k\right)\left[d_k\right]]\right)-\varphi\left(\mathrm{D} F\left(x_k\right) [d_k]\right)\right. \\
    &\leq\left\|\mathrm{D} F\left(R_{x_k}\left(t_k d_k\right)\right) \mathrm{D} R_{x_k}\left(t_k d_k\right)\left[d_k\right]-\mathrm{D} F\left(x_k\right) d_k\right\| \\
    &=\left\|\mathrm{D}\left(F \circ R_{x_k}\right)\left(t_k d_k\right)\left[d_k\right]-\mathrm{D}\left(F \circ R_{x_k}\right)(0)\left[d_k\right]\right\| \\
    &\leq\|d_k\| \bigg\| \mathrm{D}\left(F \circ R_{x_k}\right)\left(t_k\left\|d_k\right\| \frac{d_k}{\left\|d_k\right\|}\right)\left[\frac{d_k}{\left\|d_k\right\|}\right] -\mathrm{D}\left(F \circ R_{x_k}\right)(0)\left[\frac{d_k}{\left\|d_k\right\|}\right] \bigg\| \\
    &\leq L t_k\left\|d_k\right\|^2.
    \end{aligned}
\end{equation}

The first inequality is derived from the Wolfe condition \eqref{eq:wolfe}, while the second inequality is obtained through the utilization of Lemma \ref{lemmaLip}(3). The equality arises from the utilization of the chain rule for complex functions. Additionally, the third inequality is derived based on the fundamental definition of the norm, and the final inequality is a direct consequence of Assumption \ref{assm1}.

Since $\varphi\left(\mathrm{D}F(x_k)[d_k]\right)<0$ and $\left\|d_k\right\| \neq 0$, we obtain
$$
\frac{\varphi^2\left(\mathrm{D}F(x_k)[d_k]\right)}{\left\|d_k\right\|^2} \leq L t_k \frac{\varphi\left(\mathrm{D}F(x_k)[d_k]\right)}{c_2-1}.
$$
By the Wolfe conditions, we have that $\left\{F\left(x_k\right)\right\}_{k \geq 0}$ is monotonically decreasing and that
$$
F\left(x_{k+1}\right)-F\left(x_0\right) \preceq_K c_1 \sum_{j=0}^k t_j \varphi\left(\mathrm{D}F(x_j)d_j\right) e
$$
for all $k \geq 0$. Then, by the assumption, since $\left\{F\left(x_k\right)\right\}_{k \geq 0} \subset \mathcal{L}$, there exists $\mathcal{F} \in \mathbb{R}^m$ such that
$$
\mathcal{F}-F\left(x_0\right) \preceq_K c_1 \sum_{j=0}^k t_j \varphi\left(\mathrm{D}F(x_j)d_j\right) e
$$
for all $k \geq 0$. Therefore, for all $k \geq 0$ and $w \in C$, we obtain
$$
\left\langle\mathcal{F}-F\left(x_0\right), w\right\rangle \leq c_1\langle e, w\rangle \sum_{j=0}^k t_j \varphi\left(\mathrm{D}F(x_j)d_j\right).
$$
This together with $c_2 \in  (0, 1) $ yields
$$
\frac{1}{c_2-1} \min \left\{\left\langle\mathcal{F}-F\left(x_0\right), \bar{w}\right\rangle \mid \bar{w} \in C\right\} \geq c_1\langle e, w\rangle \sum_{j=0}^k t_j \frac{\varphi\left(\mathrm{D}F(x_j)d_j\right)}{c_2-1}>0 .
$$
Therefore, we conclude that
$$
\sum_{k \geq 0} t_k \frac{\varphi\left(\mathrm{D}F(x_k)[d_k]\right)}{c_2-1}<\infty,
$$
which applies
$$
\sum_{k \geq 0} \frac{\varphi^2\left(\mathrm{D}F(x_k)[d_k]\right)}{\left\|d_k\right\|^2}<\infty.
$$
\end{proof}

In the following, we prove that there exist intervals of step sizes satisfying both the weak and the strong Wolfe conditions.
\begin{proposition} 
    Let F be of class $C^1$ defined on a smooth Riemannian manifold. Assume that $d$ is a $K$-descent direction for $F$ at $x$ and $F$ is lower bounded. Then, there exists $t>0$ that satisfies the Wolfe and strong Wolfe conditions.
\end{proposition}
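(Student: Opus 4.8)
The plan is to reduce the statement to a one–variable question about the $C^1$ curve $\tilde F(t):=F(R_x(td))\in\Re^m$, $t\ge 0$. By the chain rule together with the retraction identities $R_x(0_x)=x$ and $\mathrm DR_x(0_x)=\operatorname{id}$, one gets $\tilde F'(0)=\mathrm DF(x)[d]$ and, more generally, $\varphi(\tilde F'(t))=\psi_{x,d}(td)$; in particular $\delta:=\varphi(\tilde F'(0))=\psi_{x,d}(0)<0$ since $d$ is a $K$-descent direction. I would also record the elementary equivalence $y\preceq_K 0\Leftrightarrow\varphi(y)\le 0$, which holds because $C$ is compact and $\operatorname{cone}(\operatorname{conv}(C))=K^*$; with it, the Armijo condition \eqref{eq:armijo} at step size $t$ reads exactly $\varphi(\tilde F(t)-\tilde F(0)-r(t)e)\le 0$, where $r(t):=c_1t\,\delta$.

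Next I would exhibit an interval of step sizes on which \eqref{eq:armijo} holds. Set $\phi(t):=\varphi(\tilde F(t)-\tilde F(0))$. Using that $\varphi$ is $1$-Lipschitz (Lemma \ref{lemmaLip}(3)) and positively homogeneous, and that $\tilde F(t)-\tilde F(0)=t\tilde F'(0)+o(t)$, one obtains $\phi(t)=t\delta+o(t)$, hence $\phi(t)<r(t)$ for all small $t>0$ (because $\delta<c_1\delta<0$). On the other hand, each scalarization $\langle\tilde F(t)-\tilde F(0),w\rangle$ is bounded below (as $F$ is lower bounded and $w\in K^*$), so $\phi$ is bounded below while $r(t)\to-\infty$; thus $\phi(t)-r(t)\to+\infty$ and, by continuity, there is a smallest $t^\ast>0$ with $\phi(t^\ast)=r(t^\ast)$, with $\phi(t)<r(t)$ on $(0,t^\ast)$. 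Finally, since $r(t)\le 0$ and $0<\langle e,w\rangle\le 1$, one has $\varphi(\tilde F(t)-\tilde F(0)-r(t)e)\le\phi(t)-r(t)$, so $\phi(t)\le r(t)$ forces \eqref{eq:armijo}; hence Armijo holds for every $t\in(0,t^\ast]$.

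The crux is to pull out of this a single $t'\le t^\ast$ that also satisfies the curvature requirement in its strong form. Here I would work directly with the continuous function $\Theta(t):=\psi_{x,d}(td)=\varphi(\tilde F'(t))$, which has $\Theta(0)=\delta<c_2\delta$ (because $c_2<1$ and $\delta<0$). I claim $\Theta$ attains the value $c_2\delta$ somewhere in $(0,t^\ast]$: otherwise, by the intermediate value theorem $\Theta(t)<c_2\delta$ throughout $[0,t^\ast]$, so $\langle\tilde F'(s),w\rangle\le\varphi(\tilde F'(s))<c_2\delta$ for all $w\in C$, and integrating yields $\langle\tilde F(t^\ast)-\tilde F(0),w\rangle<c_2\delta\,t^\ast<c_1\delta\,t^\ast=r(t^\ast)$ for every $w\in C$ (using $c_1<c_2$); taking the supremum over the compact set $C$ gives $\phi(t^\ast)<r(t^\ast)$, contradicting the definition of $t^\ast$. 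Let $t'$ be the smallest element of $(0,t^\ast]$ with $\Theta(t')=c_2\delta$ (it is bounded away from $0$ since $\Theta$ is continuous and $\Theta(0)\ne c_2\delta$). Then $\psi_{x,d}(t'd)=c_2\varphi(\mathrm DF(x)[d])$, which at once yields the weak curvature condition \eqref{eq:wolfe} and the strong one \eqref{eq:strong_wolfe} (as $|c_2\delta|=c_2|\delta|\le c_2|\delta|$), while Armijo \eqref{eq:armijo} holds because $t'\le t^\ast$. Hence $t'$ satisfies both the Wolfe and the strong Wolfe conditions.

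I expect the last passage to be the main obstacle: the classical scalar argument would invoke a mean value theorem to pin down $\phi'(t')=c_1\phi'(0)$ \emph{exactly}, and no such identity is available for the vector-valued curve $\tilde F$. The device of tracking the scalar quantity $\Theta(t)=\varphi(\tilde F'(t))$ against the level $c_2\delta$, together with the use of compactness of $C$ in the integration step, is precisely what lets one bypass a vector mean value theorem and land the strong condition directly. Secondary care is needed to ensure that the Armijo set contains a genuine interval $(0,t^\ast]$ (handled by taking the first crossing $\phi(t^\ast)=r(t^\ast)$ and the inequality $\varphi(\tilde F(t)-\tilde F(0)-r(t)e)\le\phi(t)-r(t)$), and to state the ``lower bounded'' hypothesis on $F$ precisely enough to force $\phi(t)-r(t)\to+\infty$.
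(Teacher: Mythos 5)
Your proof is correct, and it takes a genuinely different route from the paper's in the key step. The paper fixes a single $w\in C$, applies the classical mean value theorem to the scalarized difference $\phi_w-l_w$ on $[0,\hat t\,]$ to obtain $\tilde t$ with $\mathrm{D}\phi_w(\tilde t)=c_1\varphi(\mathrm{D}F(x)[d])$, notes that $\psi_{x,d}(\tilde t d)\geq\mathrm{D}\phi_w(\tilde t)$ because $\varphi$ dominates every scalarization, and then runs the intermediate value theorem on $t\mapsto\psi_{x,d}(td)$ to locate $\bar t$ where this quantity equals $c_1\varphi(\mathrm{D}F(x)[d])$ exactly, from which both curvature conditions follow. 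You dispense with the mean value theorem entirely: you track the sup-function $\phi(t)=\varphi(\tilde F(t)-\tilde F(0))$ to its first crossing $t^\ast$ with the Armijo line and show, by integrating the scalarized derivatives and comparing with $\Theta(t)=\psi_{x,d}(td)$, that $\Theta$ must attain the level $c_2\delta$ on $(0,t^\ast]$ --- an integral mean value inequality in place of Rolle's theorem. This buys two things. First, your inequality $\varphi(\tilde F(t)-\tilde F(0)-r(t)e)\leq\phi(t)-r(t)$ establishes the vector-valued Armijo condition \eqref{eq:armijo} uniformly over $C$, whereas in the paper the first crossing $\hat t$ depends on the chosen $w$ and the passage from the scalarized inequality to the $\preceq_K$ statement is left implicit; your treatment closes that small gap. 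Second, landing directly on the level $c_2\delta$ gives the weak and strong curvature conditions \eqref{eq:wolfe} and \eqref{eq:strong_wolfe} with equality, just as the paper's choice of the level $c_1\varphi(\mathrm{D}F(x)[d])$ does. The paper's argument is shorter once one accepts working with a single scalarization; yours is self-contained with respect to the $K$-order and makes the role of the compactness of $C$ and of the lower bound on $F$ explicit.
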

\begin{proof}
    For given $w\in C$, define $\phi_w, l_w \colon \mathbb{R} \rightarrow \mathbb{R}$ by
    $$ \phi_w(t)=F(R_{x}(t d))^Tw $$
    and 
    $$ l_w(t)= F(x)^Tw + c_1 t \varphi\left(\mathrm{D}F(x)[d]\right).$$
    Observe that $\phi_w(0)=l_w(0)$. Because $d$ is a $K$-descent direction, we have $ \phi_w(0)\leq \varphi\left(\mathrm{D}F(x)[d]\right) < c_1\varphi\left(\mathrm{D}F(x)[d]\right)<0$. 
    So there exists $t>0$ such that $F(R_{x}(t d))^Tw = F(x)^Tw + c_1 t \varphi\left(\mathrm{D}F(x)[d]\right)$. Let $\hat{t}$ be the smallest number among such $t$. Because  $ e^Tw\leq 1$, we have for any $ t \in (0, \hat{t})$, 
    \begin{align*}
    F(R_{x}(t d))^Tw &\leq F(x)^Tw + c_1 t \varphi\left(\mathrm{D}F(x)[d]\right) \\
    &\leq F(x)^Tw + c_1 t \varphi\left(\mathrm{D}F(x)[d]\right)e^Tw.
    \end{align*}
    Therefore condition~\eqref{eq:armijo} holds.

    Since $\phi_w(0)=l_w(0)$ and $\phi_w(\hat{t})=l_w(\hat{t})$, there must exist $ \tilde{t}\in (0, \hat{t} )$ such that
    $$\mathrm{D}\phi_w(\tilde{t})= c_1 \varphi\left(\mathrm{D}F(x)[d]\right). $$
    Then, we have
    $$\varphi\left(\mathrm{D}F(R_{x}\left(\tilde{t}d\right))[\mathrm{D} R_{x}\left(\tilde{t} d\right)\left[d\right]]\right) \geq c_1 \varphi\left(\mathrm{D}F(x)[d]\right) .$$
    From \eqref{extentLip}, we know that $\varphi\left(\mathrm{D}F(R_{x}\left({t}d\right))[\mathrm{D} R_{x}\left({t} d\right)\left[d\right]]\right)$ can be seen as a continuous function of $t$, and by the intermediate value theorem, there exists $\overline{t}\in (0,\tilde{t})$, and because $0<c_1<c_2<1$, we obtain
    $$0 > \varphi\left(\mathrm{D}F(R_{x}\left(\overline{t}d\right))[\mathrm{D} R_{x}\left(\overline{t} d\right)\left[d\right]]\right) = c_1 \varphi\left(\mathrm{D}F(x_k)[d_k]\right) \geq c_2 \varphi\left(\mathrm{D}F(x)[d]\right). $$
    Therefore, conditions~\eqref{eq:wolfe} and~\eqref{eq:strong_wolfe} hold.
\end{proof}
\begin{remark}
    This proposition just showed that there exists $t$  such that $\varphi\left(\mathrm{D}F(R_{x}\left(td\right))[\mathrm{D} R_{x}\left(t d\right)\left[d\right]]\right) < 0$, but this does not mean it is true for all $t$. The exact relationship remains to be discussed.
\end{remark}

Now just set $ v_k= v(x_k)$, and we notice that in similar methods it usually imports a sufficient descent condition \cite{lucambio_perez_nonlinear_2018, najafi_multiobjective_2023}, that is,  for all $k$, there exists a positive $\alpha$ such that
\begin{equation}
    \varphi(\mathrm{D}F(x_k)[d_k]) \leq \alpha\varphi(\mathrm{D}F(x_k)[v_k]).\label{des}
\end{equation}
\begin{remark}
\eqref{des} is a normal assumption in many descent methods. This constraint is also used in the proof of general $\beta$  in vector optimization \cite[Thm 4.2(i)]{lucambio_perez_nonlinear_2018}, but if we assume this, it is equivalent to assuming that $d_k$ is a $K$-descent direction, which we want to avoid. So we do not use it in the following part. 
\end{remark}


From the NVRCG algorithm step 2, we have
\begin{align*}
\varphi(\mathrm{D}F(x_k)[d_k]) &\leq \varphi(\mathrm{D}F(x_k)v_k) + \beta_k \varphi(\mathrm{D}F(x_k)\mathcal{T}^{k-1}(d_{k-1})) \\
    &= \varphi(\mathrm{D}F(x_k)v_k) + \beta_k \varphi(\mathrm{D}F(R(x_{k-1}))\mathcal{T}^{k-1}(d_{k-1})),
\end{align*}
namely,
\begin{equation}\label{ineq1}
    \psi_{x_k,d_k}(0) \leq \psi_{x_k,v_k}(0) +\beta_k \psi_{x_{k-1},d_{k-1}}(t_{k-1}d_{k-1}) .
\end{equation}

From this inequality, we can easily get the following proposition.

\begin{proposition}
    Assume that $\beta_k$ is defined as follows. Then $d_k$ is a $K$-descent direction.
    $$
    \beta_k \in \begin{cases}{[0, \infty)} & \text { if } \psi_{x_{k-1},d_{k-1}}(t_{k-1}d_{k-1}) \leq 0, \\ {\left[0,-\psi_{x_k,v_k}(0) / \psi_{x_{k-1},d_{k-1}}(t_{k-1}d_{k-1})\right)} & \text { if } \psi_{x_{k-1},d_{k-1}}(t_{k-1}d_{k-1})>0. \end{cases}
    $$
\end{proposition}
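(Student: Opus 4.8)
The plan is to read off the conclusion directly from inequality~\eqref{ineq1}, using only the sign information on its two right-hand-side terms. Recall that $d_k$ is a $K$-descent direction for $F$ at $x_k$ exactly when $\psi_{x_k,d_k}(0)=\varphi(\mathrm{D}F(x_k)[d_k])<0$, so the entire task reduces to showing that the right-hand side of \eqref{ineq1}, namely $\psi_{x_k,v_k}(0)+\beta_k\,\psi_{x_{k-1},d_{k-1}}(t_{k-1}d_{k-1})$, is negative under the stated choice of $\beta_k$.

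First I would record the key sign fact: since Algorithm~\ref{alg:nvrcg} has not terminated at Step~1, the point $x_k$ is not $K$-critical, so by \cite[Lemma 3.3]{grana_drummond_steepest_2005}(2) we have $v_k=v(x_k)\neq 0$ and $\psi_{x_k,v_k}(0)=\varphi(\mathrm{D}F(x_k)[v_k])<-\|v_k\|^2/2<0$. (For the degenerate case $k=0$ this already finishes the proof, because $d_0=v_0$ is then a $K$-descent direction.) This strict negativity of $\psi_{x_k,v_k}(0)$ is what I will combine with the admissible range of $\beta_k$.

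Then I would split into the two cases of the statement. If $\psi_{x_{k-1},d_{k-1}}(t_{k-1}d_{k-1})\le 0$, then for any $\beta_k\ge 0$ the product $\beta_k\,\psi_{x_{k-1},d_{k-1}}(t_{k-1}d_{k-1})\le 0$, so \eqref{ineq1} gives $\psi_{x_k,d_k}(0)\le\psi_{x_k,v_k}(0)<0$. If instead $\psi_{x_{k-1},d_{k-1}}(t_{k-1}d_{k-1})>0$, then $-\psi_{x_k,v_k}(0)/\psi_{x_{k-1},d_{k-1}}(t_{k-1}d_{k-1})>0$, so the interval for $\beta_k$ is nonempty, and multiplying the strict bound $\beta_k<-\psi_{x_k,v_k}(0)/\psi_{x_{k-1},d_{k-1}}(t_{k-1}d_{k-1})$ by the positive quantity $\psi_{x_{k-1},d_{k-1}}(t_{k-1}d_{k-1})$ yields $\beta_k\,\psi_{x_{k-1},d_{k-1}}(t_{k-1}d_{k-1})<-\psi_{x_k,v_k}(0)$, i.e.\ $\psi_{x_k,v_k}(0)+\beta_k\,\psi_{x_{k-1},d_{k-1}}(t_{k-1}d_{k-1})<0$; once more \eqref{ineq1} gives $\psi_{x_k,d_k}(0)<0$.

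There is no genuinely hard step here, as the proposition is essentially a direct consequence of \eqref{ineq1}. The only place requiring care is the justification of $\psi_{x_k,v_k}(0)<0$: this is exactly where non-criticality of $x_k$ (guaranteed by the stopping test in Step~1 of Algorithm~\ref{alg:nvrcg}) enters, since without it the estimate $\varphi(\mathrm{D}F(x_k)[v_k])<-\|v_k\|^2/2$ of \cite[Lemma 3.3]{grana_drummond_steepest_2005}(2) would be unavailable and the case analysis would no longer close.
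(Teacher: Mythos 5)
Your proposal is correct and follows exactly the route the paper intends: the paper gives no explicit proof, stating only that the proposition follows ``easily'' from inequality~\eqref{ineq1}, and your argument---negativity of $\psi_{x_k,v_k}(0)$ at a non-critical iterate plus a sign/case analysis on $\beta_k\,\psi_{x_{k-1},d_{k-1}}(t_{k-1}d_{k-1})$---is precisely that omitted verification.
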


For the purpose of convergence, we also need the following assumption.
\begin{assumption}
\label{assum3}
    The level set $\mathcal{L}=\{x | F(x)\preceq_K F(x_0)\}$ is bounded.
\end{assumption}

Under Assumption \ref{assum3}, we can claim that $\{\psi_{x_k,v_k}(0)\}$ is bounded.

\begin{theorem}{\cite[Theorem 5.1]{bento_unconstrained_2012}}
    Each accumulation point of the sequence $\{x_k\}$ is a Pareto stationary point.
\end{theorem}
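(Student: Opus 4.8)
The plan is to combine the Zoutendijk-type estimate of Proposition~\ref{Zoutendijk} with growth bounds on the search directions $\{d_k\}$, established separately for each admissible parameter ($\FR$, $\CD$, and $\DY$), so as to prove $\liminf_{k\to\infty}\|v(x_k)\|=0$; continuity of $v$ and $\theta$ (\cite[Lemma~3.3]{grana_drummond_steepest_2005}), together with the characterization ``$x$ is $K$-critical $\Leftrightarrow v(x)=0 \Leftrightarrow \theta(x)=0$'', then yields the assertion for accumulation points. First, if the method stops at Step~1 with $v(x_k)=0$, that point is already $K$-critical and there is nothing to prove, so assume an infinite sequence is generated. Using the (strong) Wolfe conditions one checks that, for each of $\FR$, $\CD$, and $\DY$, the parameter $\beta_k$ lies in the admissible range of the preceding proposition (equivalently, the sufficient-descent bound of the next paragraph holds), so that every $d_k$ is a $K$-descent direction; since step sizes satisfying the (strong) Wolfe conditions exist, Proposition~\ref{Zoutendijk} applies and
\begin{equation}\label{eq:zplan}
\sum_{k\ge 0}\frac{\varphi^2(\mathrm{D}F(x_k)[d_k])}{\|d_k\|^2}<\infty .
\end{equation}
Moreover, since the Armijo condition~\eqref{eq:armijo} makes $\{F(x_k)\}$ monotonically nonincreasing, $\{x_k\}\subset\mathcal{L}$, so by Assumption~\ref{assum3} and continuity $\|v(x_k)\|$ and $|\psi_{x_k,v_k}(0)|=|\varphi(\mathrm{D}F(x_k)[v_k])|$ are bounded, and since $|\psi_{x_k,v_k}(0)|>\|v(x_k)\|^2/2$, the two quantities vanish together along any subsequence.

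Second, I would establish, separately for $\FR$, $\CD$, and $\DY$, a sufficient-descent-type lower bound $\varphi(\mathrm{D}F(x_k)[d_k])\le -c\,|\psi_{x_k,v_k}(0)|$ with $c>0$ together with the linear growth estimate $\|d_k\|^2=O(k)$, adapting the classical analysis of Al-Baali and its Euclidean vector counterparts in~\cite{lucambio_perez_nonlinear_2018,goncalves_2020}. For $\FR$ one iterates inequality~\eqref{ineq1} with $\beta_k=\beta_k^{\FR}$, using the strong Wolfe bound $|\psi_{x_{k-1},d_{k-1}}(t_{k-1}d_{k-1})|\le c_2|\psi_{x_{k-1},d_{k-1}}(0)|$ from~\eqref{eq:strong_wolfe} to control the ratio $\psi_{x_k,d_k}(0)/\psi_{x_k,v_k}(0)$ geometrically (the usual restriction $c_2<1/2$ is what keeps the resulting bound strictly negative); the cases $\CD$ and $\DY$ follow analogously from~\eqref{ineq1}, the descent property $\psi_{x_{k-1},d_{k-1}}(0)<0$, and the line-search relations. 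Now suppose, for contradiction, that $\liminf_{k\to\infty}\|v(x_k)\|=\gamma>0$. Then $|\psi_{x_k,v_k}(0)|$ is bounded away from $0$ for all large $k$ while $\|d_k\|^2=O(k)$, so $\varphi^2(\mathrm{D}F(x_k)[d_k])/\|d_k\|^2$ is bounded below by a positive multiple of $1/(k+1)$, and the series in~\eqref{eq:zplan} diverges, a contradiction. Hence $\liminf_{k\to\infty}\|v(x_k)\|=0$.

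Finally, to upgrade this to the statement that \emph{every} accumulation point is Pareto stationary, I would argue as in~\cite[Theorem~5.1]{bento_unconstrained_2012}: by Assumption~\ref{assum2} the nonincreasing sequence $\{F(x_k)\}$ converges, and the summability $\sum_k t_k|\varphi(\mathrm{D}F(x_k)[d_k])|<\infty$ obtained inside the proof of Proposition~\ref{Zoutendijk} holds. Given an accumulation point $x^\ast$ with $x_{k_j}\to x^\ast$, if $\theta(x^\ast)<0$ then, by continuity, $\theta$ is bounded away from $0$ on a neighborhood of $x^\ast$; combining this with the Armijo inequality and the summability above shows that iterates entering this neighborhood force $\langle F(x_k),w\rangle$ (for $w\in C$) strictly below $\lim_k\langle F(x_k),w\rangle$, which is impossible. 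Therefore $\theta(x^\ast)=0$, i.e.\ $v(x^\ast)=0$, so $x^\ast$ is a Pareto stationary point.

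The step I expect to be the main obstacle is the second one. In the classical single-objective case $\varphi(y)=y$, so~\eqref{ineq1} is an equality and one has the exact Al-Baali recursions; here $\varphi$ is only sublinear and Lipschitz with constant $1$ (Lemma~\ref{lemmaLip}), so~\eqref{ineq1} is merely an inequality, and the vector transport $\mathcal{T}^{k-1}$ couples $T_{x_{k-1}}\mathcal{M}$ with $T_{x_k}\mathcal{M}$, so one must carefully relate $\psi_{x_{k-1},\mathcal{T}^{k-1}(\cdot)}(0)$ and $\psi_{x_k,\mathcal{T}^{k-1}(\cdot)}(0)$ and invoke Assumption~\ref{assm1}. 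Obtaining the two-sided control of $\varphi(\mathrm{D}F(x_k)[d_k])$ and the $O(k)$ growth of $\|d_k\|^2$ under these weaker structural properties — and, as the paper insists, \emph{without} resorting to the sufficient-descent assumption~\eqref{des} — is where the real work lies.
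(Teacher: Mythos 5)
Your route is genuinely different from the paper's. The paper does not run the Zoutendijk/conjugate-gradient machinery at all for this statement: its entire proof is the observation that $v(x)$ is the Riemannian steepest descent direction, so that Theorem 5.1 of \cite{bento_unconstrained_2012} (a steepest-descent result with Armijo-type line search on a compact level set) transfers verbatim. You instead try to prove the statement for the CG iterates themselves, via Proposition~\ref{Zoutendijk} plus per-parameter sufficient-descent and direction-growth bounds. That is more ambitious, and it is worth noting that the paper quietly avoids your ``main obstacle'': it never proves $\|d_k\|^2=O(k)$, it simply \emph{assumes} $\sum_k 1/\|d_k\|^2=\infty$ as hypothesis \eqref{assumpDk} of Theorem~\ref{ThmFR} and its analogues.

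The genuine gap is in your third step. Your engine produces only $\liminf_{k\to\infty}\|v(x_k)\|=0$, which shows that \emph{some} accumulation point is Pareto stationary, not that \emph{every} one is, and the proposed upgrade does not close this. The summability $\sum_k t_k\,|\varphi(\mathrm{D}F(x_k)[d_k])|<\infty$ is perfectly compatible with the existence of an accumulation point $x^\ast$ with $\theta(x^\ast)<0$: the curvature condition \eqref{eq:wolfe} together with Assumption~\ref{assm1} bounds $t_k$ below only by a multiple of $|\varphi(\mathrm{D}F(x_k)[d_k])|/\|d_k\|^2$, so the per-visit decrease $t_k|\varphi(\mathrm{D}F(x_k)[d_k])|$ can tend to zero whenever $\|d_k\|\to\infty$ along the subsequence converging to $x^\ast$ --- which is exactly the regime that the liminf analysis (and the hypothesis $\sum_k 1/\|d_k\|^2=\infty$) permits. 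In the steepest-descent setting of \cite{bento_unconstrained_2012} the neighborhood argument works precisely because $d_k=v(x_k)$ is continuous, hence bounded on the compact level set of Assumption~\ref{assum3} and bounded away from zero near a non-stationary $x^\ast$; neither property is available for the CG direction $d_k=v_k+\beta_k\mathcal{T}^{k-1}(d_{k-1})$ without an additional boundedness result for $\{\|d_k\|\}$ or a Gilbert--Nocedal-type argument \cite{gilbert_global_1992} upgrading $\liminf$ to $\lim$. As written, your final paragraph asserts the conclusion of that missing argument rather than supplying it, so the full ``each accumulation point'' statement is not established; what you have is the weaker $\liminf$ conclusion that the paper's Theorems~\ref{ThmFR}--\ref{ThmDY} actually prove.
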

\begin{proof}
    The $v(x)$ we defined is a steepest descent direction, and the proof of this theorem is similar to that of Theorem 5.1 in \cite{bento_unconstrained_2012}.
\end{proof}

\subsection{NVRCG method with FR parameter}
\label{prop1FR}

Now we consider the vector extension of the FR parameter on Riemannian manifolds:
$$ \beta_k^{\FR} = \frac{\psi_{x_k,v_k}(0)}{\psi_{x_{k-1},v_{k-1}}(0)} .$$

Under some assumptions, the global convergence can be proved if the parameter $\beta $ is bounded in magnitude by $\beta_k^{\FR}$.

\begin{proposition}
\label{propFR}
    Assuming $\beta_k^{\FR} \geq \beta_k > 0$ and $t_k$ satisfies the Strong Wolfe conditions with $0< c_1< c_2\leq 1/(1+m_k) $, there exists $m_k>0$ at each iteration. If $v(x_k)\neq 0$ for each $k \geq 0$,  $d_k$ is a $K$-descent direction and the following 
    inequality holds:
    \begin{align}
        1-\frac{c_2}{1-c_2m_k}  \leq \frac{\psi_{x_{k},d_{k}}(0)}{\psi_{x_{k},v_{k}}(0)} \leq \frac{1}{1-c_2m_k}
    \end{align}
\end{proposition}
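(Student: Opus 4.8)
The plan is to run an induction on $k$ that simultaneously establishes the descent property and the two-sided bound, paralleling the classical Fletcher--Reeves analysis but with $\varphi$ playing the role of the Euclidean inner product. The convenient quantity to track is
\[
\rho_k := \frac{\psi_{x_k,d_k}(0)}{\psi_{x_k,v_k}(0)},
\]
which is well defined since $v(x_k)\neq 0$ forces $\psi_{x_k,v_k}(0)=\varphi(\mathrm{D}F(x_k)[v_k])<0$. For the base case $k=0$ one has $d_0=v_0$, so $\rho_0=1$, the stated inequality holds for any admissible $m_0$ (because $0<c_2m_0<1$), and $d_0$ is a $K$-descent direction by the lemma on $v$. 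For the inductive step I assume $d_{k-1}$ is a $K$-descent direction --- so the line search at step $k-1$ is well posed --- and that $0<\rho_{k-1}\le 1/(1-c_2m_{k-1})$.

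The first substantive step is to turn Step~2 of the algorithm into two-sided control of $\psi_{x_k,d_k}(0)$. Applying the linear map $\mathrm{D}F(x_k)[\,\cdot\,]$ to $d_k=v_k+\beta_k\mathcal{T}^{k-1}(d_{k-1})$ and then using subadditivity and positive homogeneity of $\varphi$ (Lemma~\ref{lemmaLip}(1) and the definition of $\varphi$) --- once on $d_k$ itself, once on the rearrangement $v_k=d_k-\beta_k\mathcal{T}^{k-1}(d_{k-1})$ --- yields
\[
\psi_{x_k,v_k}(0)-\beta_k\,\varphi\!\big(-\mathrm{D}F(x_k)[\mathcal{T}^{k-1}(d_{k-1})]\big)\ \le\ \psi_{x_k,d_k}(0)\ \le\ \psi_{x_k,v_k}(0)+\beta_k\,\psi_{x_{k-1},d_{k-1}}(t_{k-1}d_{k-1}),
\]
where the right-hand inequality is precisely \eqref{ineq1} and I used $\psi_{x_{k-1},d_{k-1}}(t_{k-1}d_{k-1})=\varphi(\mathrm{D}F(x_k)[\mathcal{T}^{k-1}(d_{k-1})])$, which follows from $\mathcal{T}^{k-1}(d_{k-1})=\mathrm{D}R_{x_{k-1}}(t_{k-1}d_{k-1})[d_{k-1}]$. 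Dividing through by $\psi_{x_k,v_k}(0)<0$ gives a two-sided bound on $\rho_k$, into which I feed: (i) $0<\beta_k\le\beta_k^{\FR}=\psi_{x_k,v_k}(0)/\psi_{x_{k-1},v_{k-1}}(0)$, which lets me replace $\beta_k/|\psi_{x_k,v_k}(0)|$ by $1/|\psi_{x_{k-1},v_{k-1}}(0)|$ whenever the accompanying factor has the favourable sign, and otherwise use the trivial bounds $\rho_k\ge 1$ or $\rho_k\le 1$; (ii) the strong Wolfe condition \eqref{eq:strong_wolfe} at iteration $k-1$, i.e. $|\psi_{x_{k-1},d_{k-1}}(t_{k-1}d_{k-1})|\le c_2|\psi_{x_{k-1},d_{k-1}}(0)|=c_2\rho_{k-1}|\psi_{x_{k-1},v_{k-1}}(0)|$; and (iii) the constant $m_k>0$, chosen so that the reverse term satisfies $\varphi(-\mathrm{D}F(x_k)[\mathcal{T}^{k-1}(d_{k-1})])\le m_k|\psi_{x_{k-1},d_{k-1}}(t_{k-1}d_{k-1})|$ (legitimate once $\psi_{x_{k-1},d_{k-1}}(t_{k-1}d_{k-1})\neq 0$, which the line search can arrange in view of the preceding proposition; moreover $m_k\ge 1$ by subadditivity). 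Together with the inductive hypothesis $\rho_{k-1}\le 1/(1-c_2m_{k-1})$ this gives $\rho_k\ge 1-c_2\rho_{k-1}\ge 1-\tfrac{c_2}{1-c_2m_{k-1}}$ and $\rho_k\le 1+c_2m_k\rho_{k-1}\le 1+\tfrac{c_2m_k}{1-c_2m_{k-1}}$; taking $m_k$ non-decreasing in $k$ (or a uniform bound, just as $L$ is taken uniform in the remark after Assumption~\ref{assm1}) the right-hand sides collapse to $1-\tfrac{c_2}{1-c_2m_k}$ and $\tfrac{1}{1-c_2m_k}$, which is the claimed inequality. Finally $c_2\le 1/(1+m_k)$ gives $1-c_2m_k\ge c_2>0$, so the lower bound is $\ge 0$ (and $>0$ under the strict inequality), whence $\psi_{x_k,d_k}(0)=\rho_k\psi_{x_k,v_k}(0)\le 0$ and $d_k$ is a $K$-descent direction, closing the induction.

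I expect the real obstacle to be the reverse-direction term $\varphi(-\mathrm{D}F(x_k)[\mathcal{T}^{k-1}(d_{k-1})])$. In single-objective optimization $\varphi$ is linear, so this term is exactly $-\psi_{x_{k-1},d_{k-1}}(t_{k-1}d_{k-1})$, one may take $m_k\equiv 1$, and the argument reduces to the textbook Fletcher--Reeves estimate; for genuinely vector-valued $F$, $\varphi$ is merely sublinear and there is no a priori control of $\varphi(-\cdot)$ by $-\varphi(\cdot)$, which is exactly what forces the constant $m_k$ and the restriction $c_2\le 1/(1+m_k)$ on us. The remaining work is bookkeeping: tracking the signs of $\psi_{x_{k-1},d_{k-1}}(t_{k-1}d_{k-1})$ and of $\varphi(-\mathrm{D}F(x_k)[\mathcal{T}^{k-1}(d_{k-1})])$ when passing from $\beta_k$ to $\beta_k^{\FR}$, and ensuring the telescoping closes --- which is why monotonicity (or at least $m_{k-1}\le m_k$) is needed, so that $1/(1-c_2m_{k-1})\le 1/(1-c_2m_k)$.
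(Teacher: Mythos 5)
Your proof follows essentially the same route as the paper's: induction on $k$ tracking the ratio $\psi_{x_k,d_k}(0)/\psi_{x_k,v_k}(0)$, with the upper control coming from \eqref{ineq1}, the lower control coming from an inequality of the form \eqref{mk} with a constant $m_k$, and the strong Wolfe condition plus $\beta_k\le\beta_k^{\FR}$ used to telescope back to step $k-1$. The one place you go beyond the paper is in justifying \eqref{mk}: where the paper merely asserts the existence of the integer $m$ from boundedness of $\psi$, you derive the reverse bound from sublinearity of $\varphi$ applied to $v_k=d_k-\beta_k\mathcal{T}^{k-1}(d_{k-1})$ and identify $m_k$ as the constant controlling $\varphi(-\mathrm{D}F(x_k)[\mathcal{T}^{k-1}(d_{k-1})])$ by $|\psi_{x_{k-1},d_{k-1}}(t_{k-1}d_{k-1})|$, which is a cleaner and more explicit account of the same step.
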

\begin{proof}
    According to the definition of $\psi_{x_{k},v_{k}}(0)$, we have $\psi_{x_{k},v_{k}}(0) <0$ for all $k$, and $\beta_k^{\FR}>0$.
    
    Now we prove the statement by induction.
    
    When $k=0$, we have $ \psi_{x_{0},d_{0}}(0) =\psi_{x_{0},v_0}(0) <  0$, then the inequality holds.

    Assume the inequality still holds for arbitrary $k -1$, namely,
    $$ 1-\frac{c_2}{1-c_2m_{k-1}}  \leq \frac{\psi_{x_{k-1},d_{k-1}}(0)}{\psi_{x_{k-1},v_{k-1}}(0)} \leq \frac{1}{1-c_2m_{k-1}}.    $$
Recalling \eqref{ineq1} and considering the definition of $\psi $ and $\psi$ is bounded, there must exists integer $m>0$ satisfying the inequality:
    \begin{equation}\label{mk}
    \psi_{x_k,d_k}(0) \geq \psi_{x_k,v_k}(0) - m \beta_k |\psi_{x_{k-1},d_{k-1}}(t_{k-1}d_{k-1})|.
    \end{equation}
Let $m$ be the smallest one among such integers and $m_k = \max \{m, m_{k-1}\}.$ 
Dividing by $ \psi_{x_k,v_k}(0)$, which is less than 0, on both sides and combining with the definition of $\beta^{\FR}$ and strong Wolfe condition \eqref{eq:strong_wolfe}, we have
    \begin{align*}
        \frac{\psi_{x_k,d_k}(0)}{\psi_{x_k,v_k}(0)} &\leq  \frac{\psi_{x_k,v_k}(0) - m \beta_k |\psi_{x_{k-1},d_{k-1}}(t_{k-1}d_{k-1})|}{\psi_{x_k,v_k}(0)} \\
            &= 1 - m\frac{|\beta_k \psi_{x_{k-1},d_{k-1}}(t_{k-1}d_{k-1})|}{\psi_{x_k,v_k}(0)} \\
            &\leq 1 + mc_2\beta_k\frac{\psi_{x_{k-1},d_{k-1}}(0)}{\psi_{x_k,v_k}(0)} \\
            &\leq 1 + mc_2\frac{\psi_{x_{k-1},d_{k-1}}(0)}{\psi_{x_{k-1},v_{k-1}}(0)} \\
            &\leq 1 + mc_2\frac{1}{1-c_2m_{k-1}} \\
            &\leq \frac{1}{1-c_2m_k}.
    \end{align*}

Recalling \eqref{ineq1} again, dividing $ \psi_{x_k,v_k}(0)$ on both sides, combining with the definition of $\beta^{\FR}$ and strong Wolfe conditions, we have 
    \begin{align*}
        \frac{\psi_{x_k,d_k}(0)}{\psi_{x_k,v_k}(0)} &\geq  \frac{\psi_{x_k,v_k}(0) +\beta_k \psi_{x_{k-1},d_{k-1}}(t_{k-1}d_{k-1})}{\psi_{x_k,v_k}(0)} \\
            &= 1 + \frac{\beta_k \psi_{x_{k-1},d_{k-1}}(t_{k-1}d_{k-1})}{\psi_{x_k,v_k}(0)} \\
            &\geq 1 - c_2\beta_k\frac{\psi_{x_{k-1},d_{k-1}}(0)}{\psi_{x_k,v_k}(0)} \\
            &\geq 1 - c_2\frac{\psi_{x_{k-1},d_{k-1}}(0)}{\psi_{x_{k-1},v_{k-1}}(0)} \\
            &\geq 1- c_2\frac{1}{1-c_2m_{k-1}}\\
            &\geq 1-\frac{c_2}{1-c_2m_{k}}. 
    \end{align*}

    This completes the proof.
\end{proof}

\begin{theorem}\label{ThmFR}
    Consider the NVRCG algorithm. Suppose that Assumptions \ref{assm1}, \ref{assum2} and \ref{assum3}  hold and that
    \begin{equation}\label{assumpDk}
    \sum_{k \geq 0} \frac{1}{\left\|d_k\right\|^2}=\infty.
    \end{equation}
    If  $0 < \beta_k \leq \beta_k^{\FR}$, the assumptions of Proposition \ref{propFR} hold 
      and $t_k$ satisfies the strong Wolfe conditions, 
     then $\liminf _{k \rightarrow \infty}\left\|v\left(x_k\right)\right\|=0$.
\end{theorem}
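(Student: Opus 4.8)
The plan is to argue by contradiction, combining the Zoutendijk-type inequality of Proposition~\ref{Zoutendijk} with the two-sided estimate of Proposition~\ref{propFR}. Suppose $\liminf_{k\to\infty}\|v(x_k)\|>0$, so that there exist $\gamma>0$ and an index $k_0$ with $\|v(x_k)\|\ge\gamma$ for every $k\ge k_0$. The first step is to turn this into a bound on the model value $\psi_{x_k,v_k}(0)=\varphi(\mathrm{D}F(x_k)[v_k])$: by \cite[Lemma~3.3]{grana_drummond_steepest_2005} one has $\varphi(\mathrm{D}F(x_k)[v_k])<-\|v(x_k)\|^2/2$, hence $\psi_{x_k,v_k}(0)\le-\gamma^2/2<0$ for all $k\ge k_0$, so $|\psi_{x_k,v_k}(0)|$ is bounded away from zero along the tail of the sequence.

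Next I would invoke Proposition~\ref{propFR}, whose hypotheses are assumed in the theorem: it guarantees that each $d_k$ is a $K$-descent direction and that $\psi_{x_k,d_k}(0)/\psi_{x_k,v_k}(0)\ge 1-c_2/(1-c_2m_k)$. Since $\psi_{x_k,v_k}(0)<0$, multiplying through reverses the inequality and gives $\psi_{x_k,d_k}(0)\le\bigl(1-c_2/(1-c_2m_k)\bigr)\psi_{x_k,v_k}(0)$. Writing $\kappa_k:=1-c_2/(1-c_2m_k)\ge 0$ and combining with the bound from the first step yields $|\psi_{x_k,d_k}(0)|\ge\kappa_k\gamma^2/2$, so $\psi_{x_k,d_k}^2(0)$ is bounded below by a positive constant as soon as $\kappa_k$ is.

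With this in hand the conclusion follows quickly. Since each $d_k$ is a $K$-descent direction, the strong Wolfe conditions imply the (weak) Wolfe conditions, so Proposition~\ref{Zoutendijk} applies and $\sum_{k\ge0}\psi_{x_k,d_k}^2(0)/\|d_k\|^2<\infty$. Restricting the sum to $k\ge k_0$ and using $\psi_{x_k,d_k}^2(0)\ge\kappa^2\gamma^4/4$ forces $\sum_{k\ge k_0}1/\|d_k\|^2<\infty$, which contradicts \eqref{assumpDk}. Hence $\liminf_{k\to\infty}\|v(x_k)\|=0$.

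The delicate point — essentially the only real obstacle — is obtaining a uniform positive lower bound $\kappa_k\ge\kappa>0$. From the standing condition $c_2\le 1/(1+m_k)$ one gets $1-c_2m_k\ge c_2>0$, and so directly only $\kappa_k\ge0$. To upgrade this, observe that $\{m_k\}$ is a nondecreasing sequence of positive integers with $m_k\le 1/c_2-1$, hence eventually constant, equal to some $m^{\ast}$; then $\kappa_k=1-c_2/(1-c_2m^{\ast})$ for all large $k$, which is strictly positive provided $c_2<1/(1+m^{\ast})$ (for $m^{\ast}=1$ this is exactly the classical requirement $c_2<1/2$). One should therefore either carry this strict inequality as part of "the assumptions of Proposition~\ref{propFR}," or simply note that only a tail of the Zoutendijk series is needed — which is precisely why the statement asserts a $\liminf$ and not a limit.
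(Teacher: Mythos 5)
Your argument is correct in outline but takes a genuinely different route from the paper's. You extract the key lower bound on $\psi_{x_k,d_k}^2(0)$ directly from the lower estimate in Proposition~\ref{propFR}, i.e.\ from $\psi_{x_k,d_k}(0)\le\bigl(1-\tfrac{c_2}{1-c_2m_k}\bigr)\psi_{x_k,v_k}(0)$; this is exactly how the paper argues for the CD parameter (where the constant is $1-c_2>0$ for free), but it is \emph{not} what the paper does for FR. The paper's proof never uses Proposition~\ref{propFR}: it squares the recursion \eqref{ineq1} combined with the strong Wolfe condition to get $\sigma\psi_{x_k,v_k}^2(0)\le\psi_{x_k,d_k}^2(0)+\tfrac{\beta_k^2}{2}\psi_{x_{k-1},d_{k-1}}^2(0)$ with $\sigma=(1+2c_2^2)^{-1}$, and then unrolls this inequality all the way to $k=0$ using the telescoping identity $(\beta_k^{\FR}\cdots\beta_{j+1}^{\FR})^2\,\psi_{x_j,v_j}^2(0)=\psi_{x_k,v_k}^2(0)$; the resulting alternating geometric series yields $\psi_{x_k,d_k}^2(0)\ge\tfrac{1}{16}\sigma\|v_k\|^4$ for large $k$ without ever needing $1-\tfrac{c_2}{1-c_2m_k}$ to be strictly positive. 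That is precisely the trade-off: your route is shorter and cleaner, but it hinges on a uniform bound $\kappa_k\ge\kappa>0$, which, as you correctly flag, is not guaranteed by the stated hypothesis $c_2\le 1/(1+m_k)$ in the boundary case $c_2=1/(1+m^{\ast})$.

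On that remaining point: your first proposed fix (assume the strict inequality $c_2<1/(1+m^{\ast})$) genuinely closes the gap, at the cost of a slightly stronger hypothesis than the theorem states. Your second remark --- that ``only a tail of the Zoutendijk series is needed'' --- does not: if $\kappa_k=0$ for all large $k$, no tail of the series carries any information, and the liminf-versus-limit distinction is beside the point, since the contradiction hypothesis already supplies the tail bound $\|v_k\|\ge\gamma$. Everything else is sound: the bound $\psi_{x_k,v_k}(0)<-\|v_k\|^2/2$ from Lemma~3.3, the observation that the strong Wolfe conditions imply the weak ones so that Proposition~\ref{Zoutendijk} applies to the descent directions furnished by Proposition~\ref{propFR}, and the eventual constancy of the nondecreasing integer sequence $\{m_k\}$ bounded by $1/c_2-1$.
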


\begin{proof}   
    From the definition of $v(x)$, we can easily observe $\sum_{k \geq 0} \frac{1}{\left\|d_k\right\|^2}=\infty$.
    To complete the proof by contradiction that there exists a positive constant $\epsilon$ such that
    \begin{equation}
    \label{ineqVk}
        \left\|v_k\right\| \geq \epsilon>0 \quad \mbox{ for all } k \geq 0 .
    \end{equation}
    By the definition of $d_k$ and the positiveness of $\beta_k$, and recall \eqref{ineq1}, we obtain that 
    \begin{equation}
        0 \leq-\psi_{x_k, v_k}(0) \leq-\psi_{x_k, d_k}(0)+\beta_k \psi_{x_{k-1}, d_{k-1}}\left(t_{k-1}d_{k-1}\right).
    \end{equation}
    Thus, considering that $t_k$ satisfies the strong Wolfe conditions, we have
    \begin{equation}
        0 \leq-\psi_{x_k, v_k}(0) \leq-\psi_{x_k, d_k}(0)-c_2 \beta_k \psi_{x_{k-1}, d_{k-1}}(0) .
    \end{equation}
    Hence, taking the square on both sides of the last inequality, we conclude that
    \begin{equation}
            \psi_{x_k, v_k}^2(0) \leq \psi_{x_k, d_k}^2(0)+c_2^2 \beta_k^2 \psi_{x_{k-1}, d_{k-1}}^2(0)+2 c_2 \psi_{x_k, d_k}(0) \beta_k \psi_{x_{k-1}, d_{k-1}}(0) .
    \end{equation}
    After some algebra in the right hand side (using $ 2ab\leq 2a^2+\frac{1}{2}b^2$ with $a=c_2 \psi_{x_k, d_k}(0) $ and $b=\beta_k \psi_{x_{k-1}, d_{k-1}}(0)$), the last inequality becomes
    \begin{equation}
    \label{ineqpsi}
        \sigma\psi_{x_k, v_k}^2(0) \leq \psi_{x_k, d_k}^2(0)+\frac{\beta_k^2}{2} \psi_{x_{k-1}, d_{k-1}}^2(0),  
    \end{equation}
    where $\sigma:=\left(1+2 c_2^2\right)^{-1}$. Then we adjust the terms to obtain
    $$
    \begin{aligned}
        \psi_{x_k, d_k}^2(0) &\geq \sigma\psi_{x_k,v_k}^2(0)-\frac{1}{2} \beta_k^2 \psi^2_{x_{k-1},d_{k-1}}(0) \\
        &\geq \sigma\psi_{x_k,v_k}^2(0)-\frac{1}{2} (\beta^{\FR}_k)^2 \psi^2_{x_{k-1},d_{k-1}}(0) \\
        & \geq \sigma \psi_{x_k v_k}^2(0)-\frac{1}{2} ({\beta^{\FR}_k})^2\left(\sigma \psi_{x_{k-1}, v_{k-1}}^2(0)-\frac{1}{2} ({\beta^{\FR}}_{k-1})^2 \varphi^2_{x_{k-2}, d_{k-2}}(0)\right) \\
        & \geq \sigma\left(\psi_{x_k, v_k}^2(0)-\frac{1}{2} ({\beta^{\FR}_k})^2 \psi_{x_{k-1}, v_{k-1}}^2+\left(-\frac{1}{2}\right)^2 (\beta^{\FR}_k)^2 {(\beta^{\FR}_{k-1})}^2 \psi^2_{x_{k-2}, v_{k-2}}(0)+\cdots \right.\\
        & \left.+\left(-\frac{1}{2}\right)^{k-1} {(\beta^{\FR}_k)}^2 \cdots {(\beta^{\FR}_2)}^2 \psi^2_{x_1, v_1}(0)\right) +\left(-\frac{1}{2}\right)^k {(\beta^{\FR}_k)}^2 \ldots {(\beta^{\FR}_1)}^2 \psi^2_{x_0, v_0}(0) \\
        & \geq \sigma\left(\psi_{x_k,v_k}^2(0)-\frac{1}{2} \psi_{x_k, v_k}^2(0)+\cdots+(-\frac{1}{2})^{k-1} \psi^2_{x_k,v_k}(0)\right)+\left(-\frac{1}{2}\right)^k \psi_{x_k, v_k}^2(0) \\
        &  \geq \sigma\psi_{x_k,v_k}^2(0)\sum_{j=1}^k\left(-\frac{1}{2}\right)^{j-1}+\left(-\frac{1}{2}\right)^k \psi_{x_k, v_k}^2(0) \\
        & \geq \frac{1}{4} \sigma\left\|v_k\right\|^4\sum_{j=1}^k\left(-\frac{1}{2}\right)^{j-1}+\left(-\frac{1}{2}\right)^k \frac{1}{4}\left\|v_k\right\|^4. \\
   \end{aligned}
    $$
    The second inequality comes from $ \beta_k \leq \beta_k^{\FR}$. The final inequality is derived as follows: because  $v_k$ is a  $K$-descent direction and $x_k$ is not a $K$-critical point, we have $\varphi(\mathrm{D}F(x_k)v_k)+\left\|v_k\right\|^2 / 2<0$, then we have 
    $ \frac{1}{4}\left\|v_k\right\|^4< \psi_{x_k,v_k}^2(0).$
    For sufficiently large values of $k$, there exists a threshold value $k_0$ such that for $k>k_0$, 
    $\left| \left(-\frac{1}{2}\right)^k \frac{1}{4} \left\| v_k \right\|^4 \right| \leq \frac{1}{8} \sigma \left\| v_k \right\|^4 \sum_{j=1}^k \left(-\frac{1}{2}\right)^{j-1}$.
    Because $1/3 < \sigma<1$ and $\sum_{j=1}^k\left(-\frac{1}{2}\right)^{j-1} \geq  1/2$, we have
     $$
    \begin{aligned}
        \psi_{x_k, d_k}^2(0) &\geq \frac{1}{4} \sigma\left\|v_k\right\|^4\sum_{j=1}^k\left(-\frac{1}{2}\right)^{j-1} - \frac{1}{8} \sigma \left\| v_k \right\|^4 \sum_{j=1}^k \left(-\frac{1}{2}\right)^{j-1} \\
        &= \frac{1}{8} \sigma \left\| v_k \right\|^4 \sum_{j=1}^k \left(-\frac{1}{2}\right)^{j-1} \\
        &\geq \frac{1}{16} \sigma\epsilon^4.
    \end{aligned}
    $$
    Thus, we conclude
    $$
    \frac{\psi_{x_k, d_k}^2(0)}{\left\|d_k\right\|^2} \geq \frac{\sigma\epsilon^4}{16\left\|d_k\right\|^2} >0,
    $$
    which is in contradiction with Zoutendjik's condition.

\end{proof}

\subsection{NVRCG method with CD parameter}

Now we consider the vector extension of the CD parameter in Riemannian optimization:
$$ \beta_k^{\CD} = \frac{\psi_{x_k,v_k}(0)}{\psi_{x_{k-1},d_{k-1}}(0)} .$$

\begin{proposition}\label{propcd}
    Assume that $\beta_k^{\CD} \geq \beta_k > 0$ and that $t_k$ satisfies the strong Wolfe conditions. If, for each $k \geq 0$, $v(x_k)\neq 0$, then the following inequality holds:
    \begin{align}\label{ineqCD}
        \psi_{x_{k},d_{k}}(0) \leq (1-c_2)\psi_{x_{k},v_{k}}(0) <  0.
    \end{align}
\end{proposition}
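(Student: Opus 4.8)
The plan is to prove \eqref{ineqCD} by induction on $k$, combining the basic inequality \eqref{ineq1}, the strong Wolfe condition \eqref{eq:strong_wolfe} applied at iteration $k-1$, and the hypothesis $0 < \beta_k \le \beta_k^{\CD}$. Throughout, the two sign facts that make everything work are: $\psi_{x_k,v_k}(0) = \varphi(\mathrm{D}F(x_k)[v_k]) < 0$ for every $k$ (since $v(x_k)\neq 0$ forces $x_k$ to be non-$K$-critical, by the lemma on $v$ and $\theta$), and the inductive hypothesis, which in particular supplies $\psi_{x_{k-1},d_{k-1}}(0) < 0$, so that $d_{k-1}$ is a $K$-descent direction and $\beta_k^{\CD}=\psi_{x_k,v_k}(0)/\psi_{x_{k-1},d_{k-1}}(0)>0$.

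For $k=0$ there is nothing to do: $d_0 = v_0$, hence $\psi_{x_0,d_0}(0) = \psi_{x_0,v_0}(0) \le (1-c_2)\psi_{x_0,v_0}(0) < 0$, using $0<1-c_2<1$ and $\psi_{x_0,v_0}(0)<0$. For the step $k-1\to k$, I would start from \eqref{ineq1}, bound $\beta_k\psi_{x_{k-1},d_{k-1}}(t_{k-1}d_{k-1}) \le \beta_k\lvert\psi_{x_{k-1},d_{k-1}}(t_{k-1}d_{k-1})\rvert$, and then invoke \eqref{eq:strong_wolfe} at iteration $k-1$ together with $\psi_{x_{k-1},d_{k-1}}(0)<0$ to get $\lvert\psi_{x_{k-1},d_{k-1}}(t_{k-1}d_{k-1})\rvert \le c_2\lvert\psi_{x_{k-1},d_{k-1}}(0)\rvert = -c_2\psi_{x_{k-1},d_{k-1}}(0)$. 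This gives
$$
\psi_{x_k,d_k}(0) \le \psi_{x_k,v_k}(0) + c_2\beta_k\bigl(-\psi_{x_{k-1},d_{k-1}}(0)\bigr).
$$
Since $-\psi_{x_{k-1},d_{k-1}}(0)>0$ and $\beta_k \le \beta_k^{\CD} = \psi_{x_k,v_k}(0)/\psi_{x_{k-1},d_{k-1}}(0)$, the last term is at most $\beta_k^{\CD}\,c_2\bigl(-\psi_{x_{k-1},d_{k-1}}(0)\bigr) = -c_2\psi_{x_k,v_k}(0)$, so $\psi_{x_k,d_k}(0) \le (1-c_2)\psi_{x_k,v_k}(0)$, which is negative because $1-c_2>0$ and $\psi_{x_k,v_k}(0)<0$. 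This closes the induction and, as a by-product, shows $d_k$ is a $K$-descent direction.

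The computation is routine; the only place I expect needs care — essentially the sole opportunity to mishandle a sign — is the final comparison $\beta_k \le \beta_k^{\CD}$: multiplying through by the positive quantity $-\psi_{x_{k-1},d_{k-1}}(0)$ preserves the inequality, and one must then correctly absorb the resulting term $-c_2\psi_{x_k,v_k}(0)$ (a positive number) into the upper bound on $\psi_{x_k,d_k}(0)$. This is precisely why the strict negativity $\psi_{x_{k-1},d_{k-1}}(0)<0$, rather than mere nonpositivity, must be carried along in \eqref{ineqCD}.
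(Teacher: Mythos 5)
Your proof is correct and follows essentially the same route as the paper's: both start from \eqref{ineq1}, apply the strong Wolfe condition \eqref{eq:strong_wolfe} at iteration $k-1$, and use $0<\beta_k\le\beta_k^{\CD}$, the only cosmetic difference being that the paper divides through by the negative quantity $\psi_{x_k,v_k}(0)$ to work with the ratio $\psi_{x_k,d_k}(0)/\psi_{x_k,v_k}(0)\ge 1-c_2$ while you keep the inequality in its raw form. Your explicit induction is in fact slightly more careful than the paper's write-up, which uses $\psi_{x_{k-1},d_{k-1}}(0)<0$ (needed both for the sign of $\beta_k^{\CD}$ and to resolve the absolute values in the strong Wolfe condition) without stating where it comes from.
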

\begin{proof}
    Recalling the inequality \eqref{ineq1}
     and dividing by $ \psi_{x_k,v_k}(0)$, which is less than 0, on both sides, we have 
    \begin{align*}
        \frac{\psi_{x_k,d_k}(0)}{\psi_{x_k,v_k}(0)} &\geq  \frac{\psi_{x_k,v_k}(0) +\beta_k \psi_{x_{k-1},d_{k-1}}(t_{k-1}d_{k-1})}{\psi_{x_k,v_k}(0)} \\
            &= 1 + \frac{\beta_k \psi_{x_{k-1},d_{k-1}}(t_{k-1}d_{k-1})}{\psi_{x_k,v_k}(0)} \\
            &\geq 1 - c_2\beta_k\frac{\psi_{x_{k-1},d_{k-1}}(0)}{\psi_{x_k,v_k}(0)} \\
            &= 1 - c_2\frac{\beta_k}{\beta_k^{\CD}} \\
            &\geq 1-c_2.
    \end{align*}
    The second inequality comes from the strong Wolfe condition \eqref{eq:strong_wolfe} 
    and the third inequality is using the definition of $\beta_k^{\CD}$. 
    We also have the inequality that $ \psi_{x_k, v_k}(0)  < 0 $ that comes from the definition, here comes
    $ \psi_{x_{k},d_{k}}(0) \leq (1-c_2)\psi_{x_{k},v_{k}}(0) <  0$.
\end{proof}

\begin{theorem}
    Consider the NVRCG algorithm. Suppose that Assumptions \ref{assm1}, \ref{assum2} and \ref{assum3} hold and that
    \begin{equation}
    \sum_{k \geq 0} \frac{1}{\left\|d_k\right\|^2}=\infty.
    \end{equation}
    
     If $0 \leq \beta_k \leq  \beta_k^{\CD}$ and $t_k$ satisfies the strong Wolfe conditions, then $\liminf _{k \rightarrow \infty}\left\|v\left(x_k\right)\right\|=0$ holds.
\end{theorem}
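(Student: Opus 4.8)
The plan is to argue by contradiction, in the same spirit as the proof of Theorem~\ref{ThmFR}, but the argument here will be much shorter: Proposition~\ref{propcd} already supplies a clean one-step bound relating $\psi_{x_k,d_k}(0)$ to $\psi_{x_k,v_k}(0)$, so none of the telescoping / geometric-series estimates needed for the FR parameter are required. Before starting, I would dispose of two routine points. First, if the algorithm terminates at some iteration with $v(x_k)=0$ the conclusion is immediate, so I may assume $v(x_k)\neq 0$ for all $k\geq 0$, which is exactly the hypothesis of Proposition~\ref{propcd} (the boundary case $\beta_k=0$ is trivial: then $d_k=v_k$ and the asserted inequality reduces to $\psi_{x_k,v_k}(0)\leq(1-c_2)\psi_{x_k,v_k}(0)$, true because $\psi_{x_k,v_k}(0)<0$). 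Second, a step size satisfying the strong Wolfe conditions also satisfies the weak ones~\eqref{eq:armijo}--\eqref{eq:wolfe}: once $d_k$ is a $K$-descent direction we have $\varphi(\mathrm{D}F(x_k)[d_k])<0$, so $|\varphi(\cdot)|\leq c_2|\varphi(\mathrm{D}F(x_k)[d_k])|$ implies $\varphi(\cdot)\geq c_2\varphi(\mathrm{D}F(x_k)[d_k])$; hence Proposition~\ref{Zoutendijk} will be applicable along the whole sequence.

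Next I would suppose, for contradiction, that there is $\epsilon>0$ with $\|v_k\|\geq\epsilon$ for all $k$. By Proposition~\ref{propcd},
$$
\psi_{x_k,d_k}(0)\leq(1-c_2)\,\psi_{x_k,v_k}(0)<0,
$$
which in particular certifies that $d_k$ is a $K$-descent direction at $x_k$ for every $k$. Squaring (both sides are negative), and using that $v_k$ is a $K$-descent direction while $x_k$ is not $K$-critical, so $\varphi(\mathrm{D}F(x_k)v_k)+\|v_k\|^2/2<0$ and therefore $\psi_{x_k,v_k}^2(0)>\frac{1}{4}\|v_k\|^4\geq\frac{1}{4}\epsilon^4$, I would obtain
$$
\psi_{x_k,d_k}^2(0)\geq(1-c_2)^2\,\psi_{x_k,v_k}^2(0)\geq\frac{(1-c_2)^2}{4}\,\epsilon^4,
$$
and hence $\psi_{x_k,d_k}^2(0)/\|d_k\|^2\geq(1-c_2)^2\epsilon^4/(4\|d_k\|^2)$ for all $k$.

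Finally, since each $d_k$ is a $K$-descent direction and $t_k$ satisfies the Wolfe conditions, Proposition~\ref{Zoutendijk} yields $\sum_{k\geq 0}\psi_{x_k,d_k}^2(0)/\|d_k\|^2<\infty$. Combined with the lower bound just derived, this forces $\sum_{k\geq 0}1/\|d_k\|^2<\infty$, contradicting the standing hypothesis $\sum_{k\geq 0}1/\|d_k\|^2=\infty$. Therefore no such $\epsilon$ can exist, i.e.\ $\liminf_{k\to\infty}\|v(x_k)\|=0$. I do not expect a genuine analytic obstacle here; the only things to watch are the two bookkeeping reductions in the first paragraph (strong $\Rightarrow$ weak Wolfe, and the $\beta_k=0$ edge case of Proposition~\ref{propcd}), with everything else being a direct combination of Proposition~\ref{propcd} with Zoutendijk's condition.
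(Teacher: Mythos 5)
Your argument is correct and follows essentially the same route as the paper's proof: contradiction via $\|v_k\|\geq\epsilon$, the bound $\psi_{x_k,d_k}^2(0)\geq(1-c_2)^2\psi_{x_k,v_k}^2(0)\geq\tfrac{1}{4}(1-c_2)^2\epsilon^4$ from Proposition~\ref{propcd} together with $\tfrac{1}{4}\|v_k\|^4<\psi_{x_k,v_k}^2(0)$, and then Zoutendijk's condition against $\sum_k 1/\|d_k\|^2=\infty$. The only difference is that you spell out the bookkeeping reductions (termination, the $\beta_k=0$ edge case, strong Wolfe implying weak Wolfe) that the paper leaves implicit; these are handled correctly.
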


\begin{proof}
    Assume by contradiction as in Theorem \ref{ThmFR} that there exists a positive constant $\epsilon$ such that
    \begin{equation*}
        \left\|v_k\right\| \geq \epsilon>0 \quad \mbox{ for all } k \geq 0 .
    \end{equation*}


    
    Because  $v_k$ is a  $K$-descent direction and $x^k$ is not a $K$-critical point, we have $\psi_{x_k,v_k}(0)+\left\|v_k\right\|^2 / 2<0$, and thus
    $ \frac{1}{4}\left\|v_k\right\|^4< \psi_{x_k,v_k}^2(0).$ Combining this with \eqref{ineqCD} we have
    $$
    \begin{aligned}
    \psi_{x_k, d_k}^2(0) &\geq (1-c_2)^2 \psi_{x_k,v_k}^2(0) \\
    & \geq \frac{1}{4}(1-c_2)^2||v_k||^4 \\
    & \geq \frac{1}{4}(1-c_2)^2\epsilon^4.
    \end{aligned}
    $$
    Then, we have
    $$
    \frac{\psi_{x_k, d_k}^2(0)}{\left\|d_k\right\|^2} \geq \frac{1}{4}(1-c_2)^2\epsilon^4\frac{1}{\left\|d_k\right\|^2} >0,
    $$
leading to a contradiction with Zoutendjik's condition.

\end{proof}

\subsection{NVRCG method with DY parameter}
Consider the $\beta^{\DY}$ vector parameter on Riemannian manifolds:
$$ \beta^{\DY}_k=\frac{-\psi_{x_k,v_k}(0)}{\psi_{x_{k-1},d_{k-1}}(t_{k-1}d_{k-1})-\psi_{x_{k-1},d_{k-1}}(0)}.$$
Using some simple derivations, we have
$$
\begin{aligned}
 \beta_k^{\DY} \psi_{x_{k-1}, d_{k-1}}(0)&=\psi_{x_{k, v_k}(0)}+\beta_k^{\DY} \psi_{x_{k-1}, d_{k-1}}\left(t_{k-1}, d_{k-1}\right) \\
& \geq \varphi\left(D F\left(x_k\right)\left(v_k+\beta_k^{\DY} \mathcal{T}^{k-1}\left(d_{k-1}\right)\right)\right) \\
& =\varphi\left(D F\left(x_k\right) d_k\right) \\
& =\psi_{x_k, d_k} (0).
\end{aligned}
$$
If $\psi_{x_{k-1}, d_{k-1}}(0)<0$,
then it holds that 
\begin{equation}\label{ineqDY}
    \beta_k^{\DY} \leq \frac{\psi_{x_k, d_k}(0)}{\psi_{x_{k-1}, d_{k-1}}(0)}.
\end{equation}

\begin{proposition}
    Assume $0\leq \beta_k \leq \beta_k^{\DY}$ and that $t_k$ satisfies the Wolfe conditions. If, for each $k \geq 0$, $v(x_k)\neq 0$, then the algorithm is well defined and the following two 
    inequalities hold:
    \begin{align}
        \psi_{x_k, d_k}(0) &\leq 0, \\
        \psi_{x_{k},d_{k}}(0) &\leq \psi_{x_{k},d_{k}}(t_{k}d_{k}).
    \end{align}
\end{proposition}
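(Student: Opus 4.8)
The plan is to establish the first inequality in its strict form $\psi_{x_k,d_k}(0)<0$ by induction on $k$, proving the well-definedness of $\beta_k^{\DY}$ along the way, and then to read off the second inequality directly from the curvature Wolfe condition~\eqref{eq:wolfe}. Note that only the weak Wolfe conditions are used. For the base case, $v(x_0)\neq 0$ means $x_0$ is not $K$-critical, so $\psi_{x_0,d_0}(0)=\varphi(\mathrm{D}F(x_0)[v(x_0)])<0$ by part~(2) of the lemma on $v$ and $\theta$.

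For the inductive step, assume $\psi_{x_{k-1},d_{k-1}}(0)<0$. Since $t_{k-1}$ satisfies~\eqref{eq:wolfe}, $\psi_{x_{k-1},d_{k-1}}(t_{k-1}d_{k-1})\ge c_2\psi_{x_{k-1},d_{k-1}}(0)$, hence the denominator of $\beta_k^{\DY}$ satisfies $\psi_{x_{k-1},d_{k-1}}(t_{k-1}d_{k-1})-\psi_{x_{k-1},d_{k-1}}(0)\ge(c_2-1)\psi_{x_{k-1},d_{k-1}}(0)>0$ because $c_2<1$. As $v(x_k)\neq 0$ gives $\psi_{x_k,v_k}(0)<0$, the numerator $-\psi_{x_k,v_k}(0)$ is positive, so $\beta_k^{\DY}$ is well defined and strictly positive; thus the admissible interval $[0,\beta_k^{\DY}]$ for $\beta_k$ is nondegenerate, which is the well-definedness claim. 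I would then invoke the algebraic identity derived above, namely $\psi_{x_k,v_k}(0)+\beta_k^{\DY}\psi_{x_{k-1},d_{k-1}}(t_{k-1}d_{k-1})=\beta_k^{\DY}\psi_{x_{k-1},d_{k-1}}(0)<0$, and combine it with~\eqref{ineq1}, which reads $\psi_{x_k,d_k}(0)\le\psi_{x_k,v_k}(0)+\beta_k\psi_{x_{k-1},d_{k-1}}(t_{k-1}d_{k-1})$. Here I split on the sign of $\psi_{x_{k-1},d_{k-1}}(t_{k-1}d_{k-1})$: if it is $\le 0$, then $\beta_k\ge 0$ makes the last term nonpositive and $\psi_{x_k,d_k}(0)\le\psi_{x_k,v_k}(0)<0$; if it is $>0$, then $0\le\beta_k\le\beta_k^{\DY}$ yields $\beta_k\psi_{x_{k-1},d_{k-1}}(t_{k-1}d_{k-1})\le\beta_k^{\DY}\psi_{x_{k-1},d_{k-1}}(t_{k-1}d_{k-1})$, so $\psi_{x_k,d_k}(0)\le\psi_{x_k,v_k}(0)+\beta_k^{\DY}\psi_{x_{k-1},d_{k-1}}(t_{k-1}d_{k-1})=\beta_k^{\DY}\psi_{x_{k-1},d_{k-1}}(0)<0$. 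In either case $\psi_{x_k,d_k}(0)<0$, which closes the induction and in particular gives $\psi_{x_k,d_k}(0)\le 0$.

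For the second inequality, I would use~\eqref{eq:wolfe} once more at step $k$: $\psi_{x_k,d_k}(t_k d_k)\ge c_2\psi_{x_k,d_k}(0)$. Since $\psi_{x_k,d_k}(0)\le 0$ and $c_2\in(0,1)$, we have $c_2\psi_{x_k,d_k}(0)\ge\psi_{x_k,d_k}(0)$, and chaining the two bounds gives $\psi_{x_k,d_k}(t_k d_k)\ge\psi_{x_k,d_k}(0)$, as required.

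The main obstacle is that the sign of $\psi_{x_{k-1},d_{k-1}}(t_{k-1}d_{k-1})$ is not controlled a priori, so the case distinction above is essential; the key structural fact that makes it work is that the DY denominator is strictly positive — a consequence precisely of the weak Wolfe condition together with the inductive hypothesis $\psi_{x_{k-1},d_{k-1}}(0)<0$ — which both legitimizes the definition of $\beta_k^{\DY}$ and lets the bound $\beta_k\le\beta_k^{\DY}$ propagate descent. Everything else is routine manipulation of~\eqref{ineq1} and the Wolfe inequalities.
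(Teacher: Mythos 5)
Your proof is correct and follows essentially the same route as the paper: induction on $k$, the identity $\psi_{x_k,v_k}(0)+\beta_k^{\DY}\psi_{x_{k-1},d_{k-1}}(t_{k-1}d_{k-1})=\beta_k^{\DY}\psi_{x_{k-1},d_{k-1}}(0)$ coming from the definition of $\beta_k^{\DY}$, inequality~\eqref{ineq1}, and the curvature Wolfe condition for the second claim. The only (harmless) difference is bookkeeping: the paper carries both inequalities through the induction and uses the decomposition $\beta_k\psi(t_{k-1}d_{k-1})=\beta_k[\psi(t_{k-1}d_{k-1})-\psi(0)]+\beta_k\psi(0)$ with the nonnegativity of the bracket, whereas you carry only the first inequality and replace that step by a case split on the sign of $\psi_{x_{k-1},d_{k-1}}(t_{k-1}d_{k-1})$, re-deriving the positivity of the DY denominator directly from~\eqref{eq:wolfe}.
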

\begin{proof}
It can be proved by induction as in \cite{sato_riemannian_2021}.

 When $k = 0$, we have $\psi_{x_0, d_0}(0) = \psi_{x_0, v_0}(0)\leq 0$ and 
 $\psi_{x_{0},d_{0}}(t_{0}d_{0}) \geq c_2 \psi_{x_{0},d_{0}}(0) \geq \psi_{x_{0},d_{0}}(0). $

Assume these two inequalities still hold for arbitrary $k -1$. Then $\beta_k^{\DY} \leq 0$ holds, which means it is well defined. Recalling inequality \eqref{ineq1}, combined with the definition of $\beta^{\DY}$, we can easily deduce
    \begin{align*}
        \psi_{x_k,d_k}(0) &\leq \psi_{x_k,v_k}(0) +\beta_k \psi_{x_{k-1},d_{k-1}}(t_{k-1}d_{k-1}) \\
        &= \psi_{x_k,v_k}(0) +\beta_k [\psi_{x_{k-1},d_{k-1}}(t_{k-1}d_{k-1})- \psi_{x_{k-1},d_{k-1}}(0)] + \beta_k\psi_{x_{k-1},d_{k-1}}(0)\\
        &\leq \psi_{x_k,v_k}(0) +\beta_k^{\DY} [\psi_{x_{k-1},d_{k-1}}(t_{k-1}d_{k-1})- \psi_{x_{k-1},d_{k-1}}(0)] + \beta_k\psi_{x_{k-1},d_{k-1}}(0)\\
        &= \beta_k\psi_{x_{k-1},d_{k-1}}(0) \\
        &<0.
    \end{align*}
About the second inequality, considering Wolfe condition \eqref{eq:wolfe}, we have 
$$
\psi_{x_{k},d_{k}}(t_{k}d_{k}) \geq c_2\psi_{x_{k},d_{k}}(0) \geq \psi_{x_{k},d_{k}}(0).
$$

\end{proof}

\begin{theorem}
\label{ThmDY}
    Consider the NVRCG algorithm. Assume that Assumptions \ref{assm1}, \ref{assum2} and \ref{assum3}  hold and that
    \begin{equation}
    \sum_{k \geq 0} \frac{1}{\left\|d_k\right\|^2}=\infty.
    \end{equation}
    
     If $ 0 \leq \beta_k \leq  \beta_k^{\DY}$ and $t_k$ satisfies the strong Wolfe conditions, then $\liminf _{k \rightarrow \infty}\left\|v\left(x_k\right)\right\|=0$ holds.
\end{theorem}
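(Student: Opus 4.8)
The plan is to argue by contradiction, paralleling the proofs of Theorem~\ref{ThmFR} and of the CD case. Suppose there is $\epsilon>0$ with $\|v_k\|\ge\epsilon$ for all $k$. The goal is a uniform lower bound $\psi_{x_k,d_k}^2(0)\ge\frac{1}{4}(1-c_2)^2\epsilon^4$, which combined with $\sum_{k\ge0}1/\|d_k\|^2=\infty$ forces $\sum_{k\ge0}\psi_{x_k,d_k}^2(0)/\|d_k\|^2=\infty$, contradicting the Zoutendijk-type estimate of Proposition~\ref{Zoutendijk} (recall $\varphi(\mathrm{D}F(x_k)[d_k])=\psi_{x_k,d_k}(0)$). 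Before that, note that the strong Wolfe conditions imply the weak ones, and that the preceding proposition shows $d_k$ is a $K$-descent direction and that $\beta_k^{\DY}$ is well defined: its denominator equals $\psi_{x_{k-1},d_{k-1}}(t_{k-1}d_{k-1})-\psi_{x_{k-1},d_{k-1}}(0)\ge(c_2-1)\psi_{x_{k-1},d_{k-1}}(0)>0$ by~\eqref{eq:wolfe}. Hence Proposition~\ref{Zoutendijk} applies to the generated sequence. Recall also that, $v_k$ being the steepest descent direction at the non-critical point $x_k$, one has $\psi_{x_k,v_k}(0)=\varphi(\mathrm{D}F(x_k)[v_k])<-\|v_k\|^2/2\le-\epsilon^2/2<0$, and in particular $\frac{1}{4}\|v_k\|^4<\psi_{x_k,v_k}^2(0)$.

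The crux is to establish the CD-type inequality $\psi_{x_k,d_k}(0)\le(1-c_2)\psi_{x_k,v_k}(0)<0$ for every $k$; once this is available, the argument is verbatim the one used for the CD parameter (square both sides, use $\frac{1}{4}\|v_k\|^4<\psi_{x_k,v_k}^2(0)$ and $\|v_k\|\ge\epsilon$ to get $\psi_{x_k,d_k}^2(0)\ge\frac{1}{4}(1-c_2)^2\epsilon^4$, divide by $\|d_k\|^2$, sum, and contradict Proposition~\ref{Zoutendijk}). For $k=0$ this is immediate since $d_0=v_0$. For $k\ge1$ I would start from~\eqref{ineq1},
$$\psi_{x_k,d_k}(0)\ \le\ \psi_{x_k,v_k}(0)+\beta_k\,\psi_{x_{k-1},d_{k-1}}(t_{k-1}d_{k-1}),$$
and split into two cases according to the sign of $\psi_{x_{k-1},d_{k-1}}(t_{k-1}d_{k-1})$. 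If it is $\le 0$, then $\beta_k\,\psi_{x_{k-1},d_{k-1}}(t_{k-1}d_{k-1})\le 0\le-c_2\psi_{x_k,v_k}(0)$ and the inequality follows directly. If it is $>0$, then since $0\le\beta_k\le\beta_k^{\DY}$,
$$\beta_k\,\psi_{x_{k-1},d_{k-1}}(t_{k-1}d_{k-1})\ \le\ \beta_k^{\DY}\psi_{x_{k-1},d_{k-1}}(t_{k-1}d_{k-1})\ =\ \bigl(-\psi_{x_k,v_k}(0)\bigr)\frac{\psi_{x_{k-1},d_{k-1}}(t_{k-1}d_{k-1})}{\psi_{x_{k-1},d_{k-1}}(t_{k-1}d_{k-1})-\psi_{x_{k-1},d_{k-1}}(0)};$$
the strong Wolfe condition~\eqref{eq:strong_wolfe} gives $\psi_{x_{k-1},d_{k-1}}(t_{k-1}d_{k-1})\le-c_2\psi_{x_{k-1},d_{k-1}}(0)$, so the last fraction is at most $c_2$ (its numerator is at most $-c_2\psi_{x_{k-1},d_{k-1}}(0)$, its denominator at least $-\psi_{x_{k-1},d_{k-1}}(0)>0$), whence $\beta_k\,\psi_{x_{k-1},d_{k-1}}(t_{k-1}d_{k-1})\le-c_2\psi_{x_k,v_k}(0)$. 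Substituting back into~\eqref{ineq1} yields $\psi_{x_k,d_k}(0)\le(1-c_2)\psi_{x_k,v_k}(0)$, which is $<0$ since $\psi_{x_k,v_k}(0)<0$ and $c_2<1$.

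The main obstacle, relative to the single-objective Riemannian DY analysis, is precisely this step. In that setting one has the exact identity $g_k^\top d_k=\beta_k^{\DY}g_{k-1}^\top d_{k-1}$, which telescopes; here $\varphi$ is only subadditive, so~\eqref{ineq1} is merely an inequality and the telescoping identity is unavailable. The case split on the sign of $\psi_{x_{k-1},d_{k-1}}(t_{k-1}d_{k-1})$, combined with a careful use of~\eqref{eq:strong_wolfe} and the structure of $\beta_k^{\DY}$, is what replaces it and still delivers a bound of exactly the CD type. The remaining steps — applicability of Proposition~\ref{Zoutendijk}, and deducing the contradiction from the uniform lower bound on $\psi_{x_k,d_k}^2(0)$ together with $\sum_{k\ge0}1/\|d_k\|^2=\infty$ — are routine and identical to the FR and CD cases.
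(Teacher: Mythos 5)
Your proof is correct, but it follows a genuinely different route from the paper's. The paper does not establish a linear sufficient-descent bound for the DY family at all: it reuses the quadratic inequality \eqref{ineqpsi} derived in the FR analysis, $\sigma\psi_{x_k,v_k}^2(0)\le\psi_{x_k,d_k}^2(0)+\tfrac{\beta_k^2}{2}\psi_{x_{k-1},d_{k-1}}^2(0)$, and combines it with the DY-specific bound \eqref{ineqDY}, $\beta_k^{\DY}\le\psi_{x_k,d_k}(0)/\psi_{x_{k-1},d_{k-1}}(0)$, so that the $\psi_{x_{k-1},d_{k-1}}^2(0)$ term cancels and one reads off $\psi_{x_k,d_k}^2(0)\ge\tfrac{2}{3}\sigma\psi_{x_k,v_k}^2(0)\ge\tfrac{1}{6}\sigma\epsilon^4$ in a single step, with no induction and no case analysis. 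You instead prove, by induction and a sign split on $\psi_{x_{k-1},d_{k-1}}(t_{k-1}d_{k-1})$, the CD-type inequality $\psi_{x_k,d_k}(0)\le(1-c_2)\psi_{x_k,v_k}(0)<0$ and then transplant the CD argument verbatim; I checked the case split and the bound of the fraction by $c_2$, and both are sound. The two routes buy different things. The paper's is shorter once \eqref{ineqpsi} and \eqref{ineqDY} are in hand, but its derivation of \eqref{ineqDY} uses the identity $\varphi(\mathrm{D}F(x_k)(v_k+\beta_k^{\DY}\mathcal{T}^{k-1}(d_{k-1})))=\varphi(\mathrm{D}F(x_k)d_k)$, which literally requires $\beta_k=\beta_k^{\DY}$, whereas the theorem allows any $0\le\beta_k\le\beta_k^{\DY}$; your argument uses only $\beta_k\le\beta_k^{\DY}$ and \eqref{ineq1}, so it covers the stated generality cleanly and, as a by-product, establishes an explicit sufficient descent condition of the form \eqref{des} (with $\alpha=1-c_2$) for the whole DY family, which the paper obtains only implicitly. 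The final constants differ ($\tfrac{1}{4}(1-c_2)^2\epsilon^4$ versus $\tfrac{1}{6}\sigma\epsilon^4$) but both suffice for the contradiction with Proposition~\ref{Zoutendijk}.
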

\begin{proof}
    Assume by contradiction as discussed in Theorem \ref{ThmFR} that the inequalities \eqref{ineqVk} and \eqref{ineqpsi} still hold.
    Then we adjust the terms of \eqref{ineqpsi} and use \eqref{ineqDY} to obtain
    $$
    \begin{aligned}
        \psi_{x_k, d_k}^2(0)&\geq \sigma\psi_{x_k, v_k}^2(0) - \frac{\beta_k^2}{2} \psi_{x_{k-1}, d_{k-1}}^2(0) \\
    & \geq  \sigma\psi_{x_k, v_k}^2(0) - \frac{1}{2}\frac{\psi^2_{x_k, d_k}(0)}{\psi^2_{x_{k-1}, d_{k-1}}(0)}\psi_{x_{k-1}, d_{k-1}}^2(0) \\
    & = \sigma\psi_{x_k, v_k}^2(0) - \frac{1}{2}\psi^2_{x_k, d_k}(0).
    \end{aligned}
    $$

    Because  $v_k$ is a  $K$-descent direction and $x^k$ is not a ${K}$-critical point, we have $\varphi(\mathrm{D}F(x_k)v_k)+\left\|v_k\right\|^2 / 2<0$, and hence
    $ \frac{1}{4}\left\|v_k\right\|^4< \psi_{x_k,v_k}^2(0).$ Therefore, it holds that 
    $$
    \psi_{x_k, d_k}^2(0) \geq \frac{2}{3}\sigma\psi_{x_k, v_k}^2(0) \geq \frac{1}{6}\sigma\left\|v_k\right\|^4.
    $$
    Then we have
    $$
    \frac{\psi_{x_k, d_k}^2(0)}{\left\|d_k\right\|^2} \geq \frac{\sigma\epsilon^4}{6\left\|d_k\right\|^2} >0,
    $$
    which leads to a contradiction with Zoutendjik's condition.
\end{proof}

\subsection{NVRCG method with PRP, HS, and LS parameter}
While the FR, DY, and CD parameters of conjugate gradient methods have been extended, the theoretical properties of the PRP, HS, and LS parameters remain less explored. Even in Euclidean spaces, these three conjugate gradient methods with the (strong) Wolfe conditions do not guarantee convergence \cite{andrei2020nonlinear}. Powell~\cite{powell1984nonconvex} demonstrated that the PRP and HS parameters combined with exact line searches can exhibit cycling behavior without approaching a solution point. As a result, various variants have been proposed and analyzed.
Gilbert and Nocedal \cite{gilbert_global_1992} proved that global convergence under inexact line search can be achieved for \(\beta_k = \max\{\beta^{\PRP}_k, 0\}\) and \(\beta_k = \max\{\beta^{\HS}_k, 0\}\).  Dai et al. \cite{dai2000convergence} showed that the positiveness restriction on \(\beta_k\) cannot be relaxed for the PRP parameter.  
There is a large variety of hybrid conjugate gradient methods whose purpose is to combine the properties of the standard ones in order to get new ones, rapidly convergent to a solution. The idea is to avoid jamming. For example, the method with FR parameter has strong convergence properties, but it may not perform well in computational experiments. On the other hand, although the method with PRP parameter may not generally converge, they often perform better than FR. Indeed, the method with PRP parameter possesses a built-in restart feature that directly addresses to jamming. When the difference between $x_k$ and $x_{k+1}$ is small enough, the numerator of $ \beta^{\PRP}$ tends to zero. In this case, the search direction $d_k$  is essentially the steepest descent direction just after a restart occurred.
Therefore, the combination of these methods tries to exploit the attractive features of each one, which is the motivation for the hybrid computational scheme PRP--FR.
The PRP--, HS--, and LS--type hybrid schemes of conjugate gradient methods to generate descent directions have been discussed in \cite{andrei2020nonlinear, sato_riemannian_2021_article}. 
Here, we follow the hybrid idea and simply propose the following algorithms. 

Recall the PRP  parameter
$$
\beta^{\PRP}_k = \frac{-\psi_{x_k,v_k}(0)+\psi_{x_{k},\mathcal{T}^{k-1}{(v_k)}}(0)}{-\psi_{x_{k-1},v_{k-1}}(0)}.
$$

\begin{theorem}
    Under the assumptions for $\beta^{\FR}$, we set $\beta_k=\max\{0, \min\{\beta_k^{\FR},\beta_k^{\PRP}\}\}$. Then $\liminf _{k \rightarrow \infty}\left\|v\left(x_k\right)\right\|=0$ holds.
\end{theorem}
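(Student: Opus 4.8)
The plan is to reduce the hybrid PRP--FR scheme to the Fletcher--Reeves analysis of Theorem~\ref{ThmFR}. The crucial structural fact is that the choice $\beta_k=\max\{0,\min\{\beta_k^{\FR},\beta_k^{\PRP}\}\}$ always satisfies $0\leq\beta_k\leq\beta_k^{\FR}$: nonnegativity is immediate from the outer maximum, and the upper bound follows because $\min\{\beta_k^{\FR},\beta_k^{\PRP}\}\leq\beta_k^{\FR}$ while $\beta_k^{\FR}>0$ (the numerator $\psi_{x_k,v_k}(0)$ and denominator $\psi_{x_{k-1},v_{k-1}}(0)$ are both negative whenever $v(x_k),v(x_{k-1})\neq 0$), so that $\max\{0,\min\{\beta_k^{\FR},\beta_k^{\PRP}\}\}\leq\max\{0,\beta_k^{\FR}\}=\beta_k^{\FR}$. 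Thus the hybrid parameter sits in exactly the range for which the FR machinery was built.

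First I would check that $d_k$ is a $K$-descent direction at every iteration. When $\beta_k>0$ we have $0<\beta_k\leq\beta_k^{\FR}$, so Proposition~\ref{propFR} applies verbatim and yields both the descent property and the two-sided estimate $1-c_2/(1-c_2m_k)\leq\psi_{x_k,d_k}(0)/\psi_{x_k,v_k}(0)\leq 1/(1-c_2m_k)$. When $\beta_k=0$, the direction reduces to $d_k=v_k=v(x_k)$, which is a $K$-descent direction because $v(x_k)\neq 0$ forces $x_k$ to be non-$K$-critical; moreover the ratio $\psi_{x_k,d_k}(0)/\psi_{x_k,v_k}(0)$ equals $1$, which still lies in the interval above since $c_2 m_k<1$ under the standing bound $c_2\leq 1/(1+m_k)$. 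Hence the conclusion of Proposition~\ref{propFR} holds for all $k$, whether or not $\beta_k$ vanishes.

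Next, since every $d_k$ is a $K$-descent direction and $t_k$ satisfies the strong Wolfe conditions --- which imply the weak Wolfe conditions --- Proposition~\ref{Zoutendijk} gives the Zoutendijk-type bound $\sum_{k\geq 0}\psi_{x_k,d_k}^2(0)/\|d_k\|^2<\infty$. I would then rerun the contradiction argument of Theorem~\ref{ThmFR} without change: assuming $\|v_k\|\geq\epsilon>0$ for all $k$, inequality~\eqref{ineqpsi} combined with $\beta_k\leq\beta_k^{\FR}$ and the telescoping of the $(\beta^{\FR})^2$ products produces $\psi_{x_k,d_k}^2(0)\geq\sigma\epsilon^4/16$ for all sufficiently large $k$, whence $\psi_{x_k,d_k}^2(0)/\|d_k\|^2\geq\sigma\epsilon^4/(16\|d_k\|^2)$; summing over $k$ and invoking the hypothesis $\sum_{k\geq 0}1/\|d_k\|^2=\infty$ contradicts the finiteness of the Zoutendijk sum. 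Therefore $\liminf_{k\to\infty}\|v(x_k)\|=0$.

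The one genuinely delicate point is the bookkeeping at iterations where $\beta_k=0$, since Theorem~\ref{ThmFR} is stated with the strict bound $0<\beta_k$. The plan resolves this by observing that every inequality used in the proof of Theorem~\ref{ThmFR} requires only $0\leq\beta_k\leq\beta_k^{\FR}$ together with the descent property of $d_k$, both of which remain valid when $\beta_k=0$ (there $d_k$ is simply the steepest descent direction and the $\beta_k^2$ terms drop out harmlessly). Everything else is a direct appeal to results already established for the FR parameter, so no new estimates are required.
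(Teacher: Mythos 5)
Your proposal is correct and follows what is evidently the intended argument: the paper states this hybrid theorem without proof, and the natural justification is precisely your observation that $\beta_k=\max\{0,\min\{\beta_k^{\FR},\beta_k^{\PRP}\}\}$ satisfies $0\leq\beta_k\leq\beta_k^{\FR}$, so the FR analysis of Proposition~\ref{propFR} and Theorem~\ref{ThmFR} applies. Your explicit handling of the boundary case $\beta_k=0$ (where $d_k=v_k$ is the steepest descent direction and every estimate in the FR proof degenerates harmlessly) is a worthwhile addition, since Theorem~\ref{ThmFR} is stated with the strict bound $0<\beta_k$.
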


Recall the LS parameter
$$ \beta_k^{\LS} :=\frac{-\psi_{x_k,v_k}(0)+\psi_{x_{k-1},\mathcal{T}^{k-1}{(v_k)}}(0)}{-\psi_{x_{k-1},d_{k-1}}(0)} .$$


    


\begin{theorem}
    Under the assumptions for $\beta^{\CD}$, we set $\beta_k=\max\{0, \min\{\beta_k^{\LS},\beta_k^{\CD}\}\}$. Then $\liminf _{k \rightarrow \infty}\left\|v\left(x_k\right)\right\|=0$ holds.
\end{theorem}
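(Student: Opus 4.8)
The plan is to reduce this statement to the already-established CD case. First I would verify that the hybrid parameter always satisfies $0 \le \beta_k \le \beta_k^{\CD}$. Indeed, by the inductive argument used in Proposition~\ref{propcd} we have $\psi_{x_{k-1},d_{k-1}}(0)<0$, and since $x_k$ is not $K$-critical (otherwise the algorithm terminates and the conclusion is immediate) we have $\psi_{x_k,v_k}(0)<0$; hence $\beta_k^{\CD}=\psi_{x_k,v_k}(0)/\psi_{x_{k-1},d_{k-1}}(0)>0$. Therefore $\min\{\beta_k^{\LS},\beta_k^{\CD}\}\le\beta_k^{\CD}$, and the outer $\max$ with $0$ produces a value in $[0,\beta_k^{\CD}]$. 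The $\beta_k^{\LS}$ term only serves to activate the built-in restart feature discussed above and plays no role in the convergence bound.

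Second, I would invoke Proposition~\ref{propcd}, treating the degenerate branch $\beta_k=0$ separately: in that case $d_k=v_k$, and since $1-c_2\in(0,1)$ and $\psi_{x_k,v_k}(0)<0$, the inequality $\psi_{x_k,d_k}(0)=\psi_{x_k,v_k}(0)\le(1-c_2)\psi_{x_k,v_k}(0)<0$ holds trivially. Combined with Proposition~\ref{propcd} for the case $\beta_k>0$, we obtain
\[
\psi_{x_k,d_k}(0)\le(1-c_2)\psi_{x_k,v_k}(0)<0 \qquad \text{for all } k,
\]
so each $d_k$ is a $K$-descent direction and the Zoutendijk-type estimate of Proposition~\ref{Zoutendijk} applies.

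Third, I would argue by contradiction exactly as in the CD theorem: assume $\|v_k\|\ge\epsilon>0$ for all $k$. Since $v_k$ is the steepest descent direction and $x_k$ is not $K$-critical, $\psi_{x_k,v_k}(0)+\|v_k\|^2/2<0$ yields $\tfrac14\|v_k\|^4<\psi_{x_k,v_k}^2(0)$. Squaring the inequality from Proposition~\ref{propcd} (both sides being negative) and chaining the two bounds gives
\[
\psi_{x_k,d_k}^2(0)\ge(1-c_2)^2\psi_{x_k,v_k}^2(0)\ge\tfrac14(1-c_2)^2\|v_k\|^4\ge\tfrac14(1-c_2)^2\epsilon^4,
\]
whence $\psi_{x_k,d_k}^2(0)/\|d_k\|^2\ge\tfrac14(1-c_2)^2\epsilon^4/\|d_k\|^2$. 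Summing over $k$ and using the hypothesis $\sum_{k\ge0}1/\|d_k\|^2=\infty$ contradicts Proposition~\ref{Zoutendijk}. Hence $\liminf_{k\to\infty}\|v(x_k)\|=0$.

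Since the whole argument is a direct reduction to the CD analysis, there is no substantial obstacle here; the only points requiring a little care are confirming $\beta_k^{\CD}>0$ (so that the $\min$/$\max$ clipping indeed lands in $[0,\beta_k^{\CD}]$ and the hypotheses of Proposition~\ref{propcd} are met) and handling the $\beta_k=0$ branch, together with the standard observation that if $v(x_k)=0$ at some iteration the algorithm stops at a Pareto stationary point and the claim holds trivially.
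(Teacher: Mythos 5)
The paper states this theorem without an explicit proof, and your argument supplies exactly the intended one: the clipping $\beta_k=\max\{0,\min\{\beta_k^{\LS},\beta_k^{\CD}\}\}$ forces $0\le\beta_k\le\beta_k^{\CD}$ (using $\beta_k^{\CD}>0$, which follows from $\psi_{x_k,v_k}(0)<0$ and, inductively, $\psi_{x_{k-1},d_{k-1}}(0)<0$), so the hypotheses of the CD theorem are met and its conclusion applies verbatim. Your separate handling of the $\beta_k=0$ branch and of termination when $v(x_k)=0$ correctly patches the only places where Proposition~\ref{propcd}'s strict-positivity hypothesis would otherwise be an issue; the argument is correct and matches the paper's evident approach.
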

Recall the HS  parameter
$$
\beta^{\HS}_k = \frac{-\psi_{x_k,v_k}(0)+\psi_{x_{k-1},\mathcal{T}^{k-1}{(v_k)}}(0)}{\psi_{x_{k},\mathcal{T}^{k-1}{(d_{k-1})}}(0)-\psi_{x_{k-1},d_{k-1}}(0)}.
$$

\begin{theorem}
    Under the assumptions for $\beta^{\DY}$, we set $\beta_k=\max\{0, \min\{\beta_k^{\HS},\beta_k^{\DY}\}\}$. Then $\liminf _{k \rightarrow \infty}\left\|v\left(x_k\right)\right\|=0$ holds.
\end{theorem}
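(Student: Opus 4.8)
The plan is to recognise that the hybrid parameter $\beta_k=\max\{0,\min\{\beta_k^{\HS},\beta_k^{\DY}\}\}$ is simply a clamped version of $\beta_k^{\DY}$, so that the statement should reduce to Theorem~\ref{ThmDY}. Indeed, whatever the value of $\beta_k^{\HS}$, one has $\min\{\beta_k^{\HS},\beta_k^{\DY}\}\le\beta_k^{\DY}$, and taking the positive part gives $0\le\beta_k\le\beta_k^{\DY}$ as soon as $\beta_k^{\DY}\ge 0$. Since the assumptions imported from the $\beta^{\DY}$ case comprise Assumptions~\ref{assm1}--\ref{assum3}, the condition $\sum_{k\ge0}1/\|d_k\|^2=\infty$, and the strong Wolfe conditions, the desired conclusion $\liminf_{k\to\infty}\|v(x_k)\|=0$ will follow once we have checked that $0\le\beta_k\le\beta_k^{\DY}$ and that each $d_k$ is a $K$-descent direction.

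First I would establish these two facts by re-running, for the clamped parameter, the induction of the proposition that precedes Theorem~\ref{ThmDY}. At $k=0$, $d_0=v_0$ is a $K$-descent direction and~\eqref{eq:wolfe} gives $\psi_{x_0,d_0}(t_0d_0)\ge c_2\psi_{x_0,d_0}(0)\ge\psi_{x_0,d_0}(0)$. Assuming $\psi_{x_{k-1},d_{k-1}}(0)<0$ and $\psi_{x_{k-1},d_{k-1}}(0)\le\psi_{x_{k-1},d_{k-1}}(t_{k-1}d_{k-1})$, the strong Wolfe condition applied to $d_{k-1}$ forces the denominator $\psi_{x_{k-1},d_{k-1}}(t_{k-1}d_{k-1})-\psi_{x_{k-1},d_{k-1}}(0)$ of $\beta_k^{\DY}$ to be positive, while the numerator $-\psi_{x_k,v_k}(0)$ is positive because $v_k$ is a descent direction; hence $\beta_k^{\DY}\ge 0$, the clamped $\beta_k$ lies in $[0,\beta_k^{\DY}]$, and the iteration is well defined. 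Then the computation of that proposition, beginning from~\eqref{ineq1} and exploiting $0\le\beta_k\le\beta_k^{\DY}$ with $\psi_{x_{k-1},d_{k-1}}(0)<0$, yields $\psi_{x_k,d_k}(0)\le\beta_k\psi_{x_{k-1},d_{k-1}}(0)<0$, so $d_k$ is a $K$-descent direction, and~\eqref{eq:wolfe} gives $\psi_{x_k,d_k}(0)\le\psi_{x_k,d_k}(t_kd_k)$, which closes the induction.

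Once $0\le\beta_k\le\beta_k^{\DY}$ and every $d_k$ is a $K$-descent direction, Proposition~\ref{Zoutendijk} applies and the remaining argument copies the contradiction in Theorem~\ref{ThmDY}: supposing $\|v_k\|\ge\epsilon>0$ for all $k$, inequality~\eqref{ineqpsi} together with~\eqref{ineqDY} (legitimate because $0\le\beta_k\le\beta_k^{\DY}$ and $\psi_{x_{k-1},d_{k-1}}(0)<0$ give $\beta_k^2\le(\beta_k^{\DY})^2\le(\psi_{x_k,d_k}(0)/\psi_{x_{k-1},d_{k-1}}(0))^2$) leads to $\psi_{x_k,d_k}^2(0)\ge\tfrac23\sigma\psi_{x_k,v_k}^2(0)\ge\tfrac16\sigma\epsilon^4$, which contradicts $\sum_{k\ge0}1/\|d_k\|^2=\infty$. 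I expect the main obstacle to be the apparent circularity that the admissible range $[0,\beta_k^{\DY}]$ for $\beta_k$ is itself defined through $\beta_k^{\DY}$, whose nonnegativity depends on descent and monotonicity of the earlier iterates; this must be untangled by proving the clamping bound and the descent/monotonicity properties simultaneously by induction rather than separately. The same template handles the PRP--FR and LS--CD hybrids, clamping instead into $[0,\beta_k^{\FR}]$ and $[0,\beta_k^{\CD}]$ and invoking Theorem~\ref{ThmFR} and the CD convergence theorem, respectively.
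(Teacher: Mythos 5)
Your reduction is correct and is precisely the intended argument: the paper states this theorem without proof, but since $\beta_k^{\DY}\ge 0$ along the iteration, the clamped parameter satisfies $0\le\beta_k\le\beta_k^{\DY}$ and the conclusion is an immediate instance of Theorem~\ref{ThmDY}. Your extra care in running the induction simultaneously for the nonnegativity of $\beta_k^{\DY}$, the descent property, and the clamping bound resolves the only genuine subtlety (the apparent circularity), and matches the induction in the proposition preceding Theorem~\ref{ThmDY}.
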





\section{Numerical experiments}
\label{sec:experiments}

In this section, we present the numerical results to illustrate the NVRCG algorithm for the different choices of parameter $\beta$ on the sphere, which is a typical manifold. 

We solve some simple problems with the steepest descent and conjugate gradient methods with FR, CD, and DY parameters. For each method, we applied the standard  Wolfe conditions. We choose a random initial point $x_0$ on each manifold and terminate the algorithms when  $ \|v(x)\|\leq 10^{-4}$ or $ t\leq 10^{-4}$ is attained and run the algorithms 100 times for each method. We implemented the code in Python 3.9.12 on an Apple MacBook Pro  with Apple M1 Pro chip and 16GB of RAM.

Consider the problem:
\begin{equation}\label{exprob:1}
    \begin{array}{ll}
    \operatorname{min} & F(x) = (f_1(x), f_2(x))^T, \\
    \text { s.t. } & x \in \mathcal{S}^1,
    \end{array}
\end{equation}
where $f_1(x)=x^{\top} A x$, $f_2(x)=c x$ and $\mathcal{S}^1$ is a $1$--dimensional sphere (circle).
Let
$
A_1=\left[\begin{array}{ll}
1 & 0 \\
0 & 1
\end{array}\right]
$,
$
A_2=\left[\begin{array}{ll}
1 & 1 \\
1 & 1
\end{array}\right]
$, 
$
A_3=\left[\begin{array}{ll}
1 & 2 \\
2 & 2
\end{array}\right]
$, and $c=[1,1]$.

Here we use three cases of $A$ to show three different situations of convergence points. In case $A_1$, convergence to a single optimal point is observed. In the case of $A_2$, the Pareto stationary points are also Pareto optimal points. Case $A_3$ shows a more general result.

Although the results presented in Table \ref{tab:1-1} with $A_2$ and $A_3$ do not exhibit significant differences due to the limited number of iterations, we can still see that the conjugate gradient methods have fewer cycles than the steepest descent method. From the results provided in Table \ref{tab:1-1} with $A_1$, it is evident that the conjugate method outperforms the steepest descent method by significantly reducing the number of iterations. Particularly noteworthy is the remarkable performance of the algorithm when employing the DY parameter. 

Furthermore, it is noteworthy that Figure \ref{Fig3.sub.2} exhibits the presence of two distinct Pareto fronts, but clearly, the result of the lower one is superior to the upper one. This phenomenon arises due to the convergence of points that correspond to Pareto stable solutions rather than Pareto optimal solutions. In many multiobjective optimization algorithms in Euclidean space, the prevailing assumption is the convexity of the objective functions, which ensures that the Pareto stable solutions coincide with the Pareto optimal solutions. However, the treatment of scenarios where convexity assumptions are relaxed, and the handling of stable solutions, remains an unresolved topic, particularly in the context of Riemannian optimization.  

\begin{table}[ht]
    \centering
\caption{Problem \eqref{exprob:1} with $c_1 = 0.1 $ and $c_2 = 0.6$}
\label{tab:1-1}
\begin{tabular}{|c|c|c|llll|}
\hline
\multirow{2}{*}{A}     & \multirow{2}{*}{n} & \multirow{2}{*}{Times} & \multicolumn{4}{c|}{Average iterations}  \\ \cline{4-7} 
                       &      &      & FR & CD & DY & SD \\ \hline
$A_1$                    &  2   & 100  &  12.17  &  17.73  &  6.95  & 29.47   \\ \hline
$A_2 $                   &  2   & 100  &  1.85  &  2.21  &  1.79  &  3.87  \\ \hline
$A_3$                    &  2   & 100  &  2.50  &  1.60  &  1.91  &  1.83  \\ \hline
\end{tabular}
\end{table}

    \begin{figure}[ht]
    \centering  
    \subfigure[Function images]{
    \label{Fig1.sub.1}
    \includegraphics[width=0.45\textwidth]{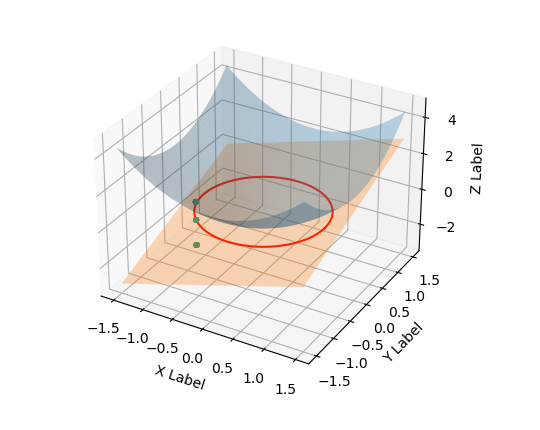}}
    \subfigure[Pareto stationary points]{
    \label{Fig1.sub.2}
    \includegraphics[width=0.5\textwidth]{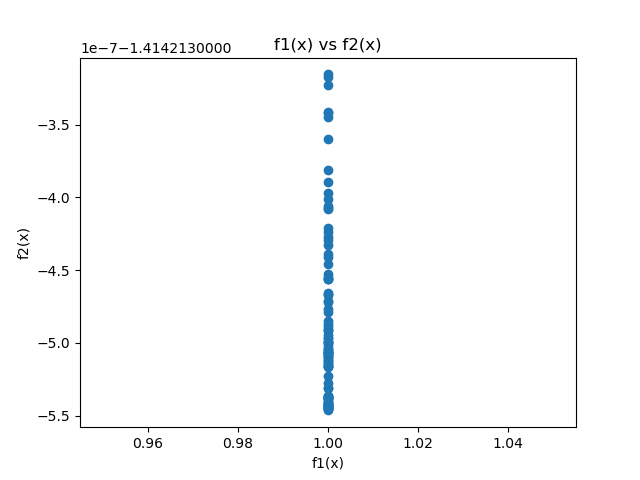}}
    \label{fig: 1}
    \caption{Problem \eqref{exprob:1} with $
    A_1
    $
    and $c$.}
    \end{figure}

    \begin{figure}[ht]
        \centering  
        \subfigure[Function images]{
        \label{Fig2.sub.1}
        \includegraphics[width=0.45\textwidth]{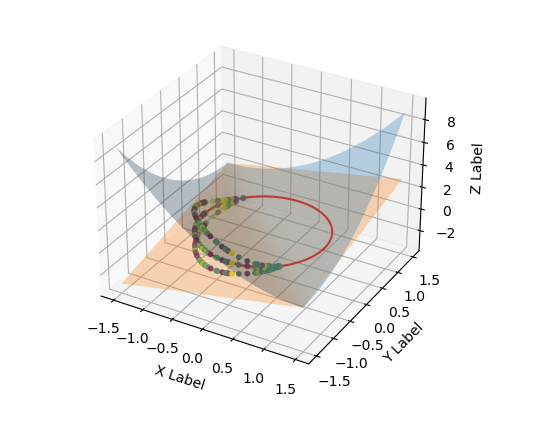}}
        \subfigure[Pareto stationary points]{
        \label{Fig2.sub.2}
        \includegraphics[width=0.5\textwidth]{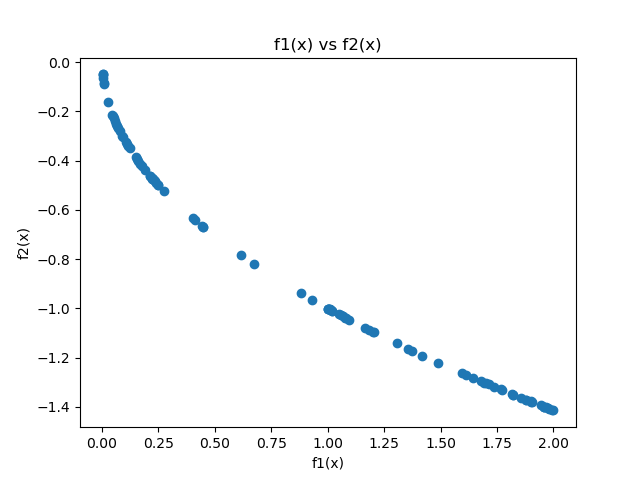}}
        \caption{Problem \eqref{exprob:1} with $
        A_2$
        and $c$.}
        \label{Fig.main}	
    \end{figure}

    \begin{figure}[ht]
	\centering  
	\subfigure[Function images]{
        \label{Fig3.sub.1}
	\includegraphics[width=0.45\textwidth]{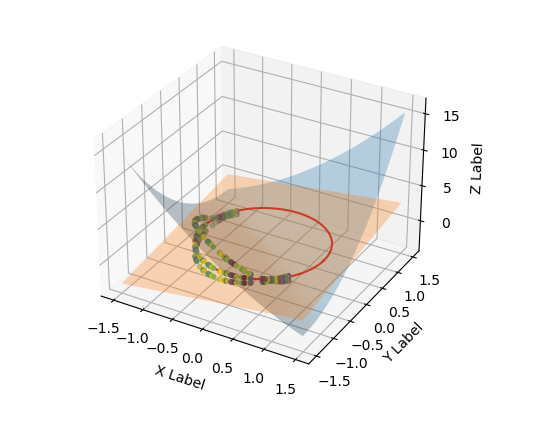}}
	\subfigure[Pareto stationary points]{
        \label{Fig3.sub.2}
	\includegraphics[width=0.5\textwidth]{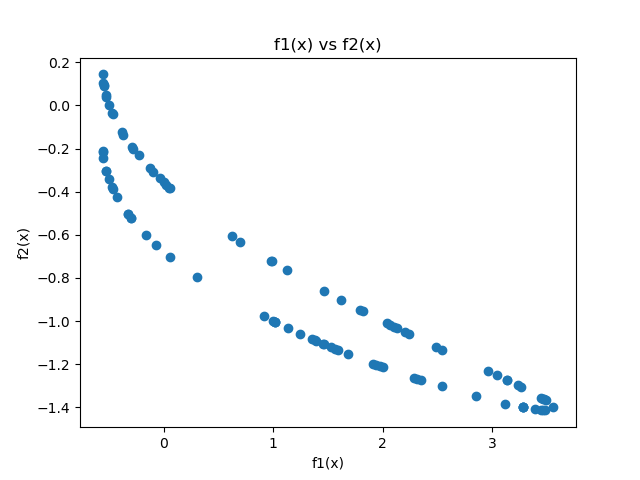}}
	\caption{Problem \eqref{exprob:1} with $A_3$ and $c$.}
        \label{fig3}
  \end{figure}

Now consider another problem:
\begin{equation}\label{exprob:2}
    \begin{array}{ll}
    \operatorname{min} & F(x) = (f_1(x), f_2(x))^T, \\
    \text { s.t. } & x \in \mathcal{S}^{n-1},
    \end{array}
\end{equation}
where $f_i(x)=x^{\top} A_i x$ for all $i $ and $\mathcal{S}^{n-1}$ is a sphere manifold.

The results can be seen in Table \ref{tab:2-1}, where we do the experiments with $c_1=0.001$, $c_2=0.6$. In order to better compare the performance of the algorithms, we set the same initial point every time.
It can be seen from the table that the advantage of the conjugate gradient descent method is not significant when dealing with low-dimensional problems.
For FR, CD, and DY parameters, in the case of $n= 2$, the average number of iterations for the conjugate gradient method is relatively low, between 3 and 4.  This may be because the problem has a lower dimension and the algorithm can find the optimal solution faster, but still faster than the steepest descent method. This disparity becomes increasingly pronounced as the dimension of the problem increases. Based on the data presented in Table \ref{tab:2-1}, it is evident that the conjugate gradient method exhibits a notably reduced number of iterations compared to the steepest descent method, particularly when employing the DY parameter. As for the hybrid algorithms, as the dimension increases, its superiority over the steepest descent method becomes increasingly pronounced. However, in comparison to the algorithms with FR, CD, or DY parameter, it still exhibits a higher number of iterations and does not possess a distinct advantage. These findings underscore the effectiveness and efficiency of the conjugate method in comparison to the steepest descent method, particularly under the influence of the DY parameter.

\begin{table}[ht]
\centering
\caption{Problem \eqref{exprob:2} with $c_1=0.001$, $c_2=0.6$}
\label{tab:2-1}
\begin{tabular}{|c|c|ccccccc|}
\hline
\multirow{2}{*}{n} & \multirow{2}{*}{Times} & \multicolumn{7}{c|}{Average iterations} \\ \cline{3-9} 
 &  & {FR} & {CD} & {DY} & {PRP-} & {LS-} & {HS-} & SD \\ \hline
  2&  100& 3.46 & 3.44 & 3.68 & 4.75 & 4.40 & 3.92 & 4.41 \\ \hline
  5&  100& 7.38 & 7.44 & 4.03 & 4.98 & 5.08 & 4.50  &  5.17\\ \hline
 10&  100& 10.22 & 7.47 & 4.59 & 7.19 & 7.19 & 6.93  & 11.07 \\ \hline
 100& 100& 9.73 & 8.83 & 4.09 & 15.38 & 15.29 & 15.82 & 33.18 \\ \hline
 200& 100& 8.11 & 8.63 & 4.13 & 18.10 & 18.24 & 16.44 & 31.05 \\ \hline
\end{tabular}
\end{table}

\section{Conclusion and future work}
\label{sec:conclusion}

In this paper, we proposed the nonlinear conjugate gradient methods for vector optimization on Riemannian manifolds with retraction and vector transport. We gave the extension of FR, CD, DY, PRP, LS, and HS parameters. We proved the convergence with the vector extensions of the FR, CD, and DY parameters, then simply give results of PRP, HS, and LS parameters. Under some assumptions, the sequence obtained by the algorithm can converge to a Pareto stationary point. Numerical experiments illustrating the practical behavior of the methods were presented.

For future work, the convergence with the exact PRP, HS, and LS parameters will be discussed. Additionally, an analysis of the convergence rate will be conducted. 
Furthermore, the treatment of Pareto stationary points in order to mitigate the influence of local optimum remains an intriguing area of research, particularly when relaxing the conventional convexity assumption. Further investigation is warranted to develop effective strategies for handling Pareto stationary points and effectively excluding them to achieve improved convergence toward global optimization.


\section*{Acknowledgments} 
\noindent This work was supported by JST, the establishment of university 
fellowships towards the creation of science technology innovation 
(JPMJFS2123), and Japan Society for the Promotion of Science, 
Grant-in-Aid for Scientific Research (C) (19K11840) and Grant-in-Aid for 
Early-Career Scientists (20K14359).

\printbibliography[heading=bibintoc, title=\ebibname]

\appendix
\addappheadtotoc

\end{document}